\DeclareMathAlphabet{\mathantt}{OT1}{antt}{li}{it}
\definecolor{myblue}{HTML}{1F77B4}
\definecolor{myorange}{HTML}{FF7F0E}
\definecolor{mygreen}{HTML}{2CA02C}
\definecolor{myred}{HTML}{D62728}
\newcommand*{\ie}{i.e.,\@\xspace}
\newcommand*{\eg}{e.g.,\@\xspace}
\newcommand*{\cf}{c.f.\@\xspace}
\DeclareMathOperator{\ran}{ran}
\newcommand*{\End}[1]{\ensuremath{ \mathrm{End} (#1) }}
\newcommand*{\rmspace}[2]{\ensuremath{ \mathrm{M} (#1 \times #2 , \mathbb{R}) }}
\newcommand*{\Matrix}[1]{\ensuremath{ \begin{pmatrix} #1 \end{pmatrix} }}
\newcommand*{\iprod}[2]{\ensuremath{ \langle #1, #2 \rangle }}
\newcommand*{\grass}[2]{\ensuremath{\mathcal{GR}(#1,#2)}}
\newcommand*{\stiefel}[2]{\ensuremath{\mathcal{ST}(#1,#2)}}
\newcommand*{\og}[1]{\ensuremath{\mathrm{O}({#1})}}
\newcommand*{\eqclass}[1]{\ensuremath{ [#1] }}
\newcommand*{\tspace}[2]{\ensuremath{T_{#1} #2}}
\newcommand*{\rmetric}[3]{\ensuremath{ \langle #1, #2 \rangle_{#3} }}
\DeclareMathOperator{\gradient}{grad}
\DeclareMathOperator{\hessian}{Hess}
\newcommand*{\ver}[2]{\ensuremath{ \mathrm{Ver}_{#1} #2 }}
\newcommand*{\hor}[2]{\ensuremath{ \mathrm{Hor}_{#1} #2 }}
\newcommand*{\gl}[1]{\ensuremath{\mathrm{GL}({#1})}}
\newcommand{\figref}[1]{\hyperref[#1]{Fig.~\ref*{#1}}}
\newcommand{\secref}[1]{\hyperref[#1]{Section~\ref*{#1}}}
\newcommand{\probref}[1]{\hyperref[#1]{Problem~\ref*{#1}}}
\newcommand{\theoref}[1]{\hyperref[#1]{Theorem~\ref*{#1}}}
\newcommand{\assref}[1]{\hyperref[#1]{Assumption~\ref*{#1}}}
\newcommand{\defref}[1]{\hyperref[#1]{Definition~\ref*{#1}}}
\newcommand{\lemmaref}[1]{\hyperref[#1]{Lemma~\ref*{#1}}}
\newcommand{\propref}[1]{\hyperref[#1]{Proposition~\ref*{#1}}}
\renewcommand{\algref}[1]{\hyperref[#1]{Algorithm~\ref*{#1}}}
\newcommand{\cororef}[1]{\hyperref[#1]{Corollary~\ref*{#1}}}
\newcommand{\appref}[1]{\hyperref[#1]{Appendix~\ref*{#1}}}
\theoremstyle{thmstyleone}%
\newtheorem{theorem}{Theorem}%  meant for continuous numbers
\newtheorem{proposition}[theorem]{Proposition}% 
\newtheorem{lemma}[theorem]{Lemma}%added by Roland
\newtheorem{corollary}[theorem]{Corollary}%added by Roland
\theoremstyle{thmstyletwo}%
\newtheorem{example}{Example}%
\newtheorem{remark}{Remark}%
\theoremstyle{thmstylethree}%
\newtheorem{problem}{Problem}%added by Roland
\begin{document}

\title[Shaping the Koopman dictionary by learning on the Grassmannian]{Shaping the Koopman dictionary %for nonlinear systems through 
by learning on the Grassmannian}

%%=============================================================%%
%% GivenName	-> \fnm{Joergen W.}
%% Particle	-> \spfx{van der} -> surname prefix
%% FamilyName	-> \sur{Ploeg}
%% Suffix	-> \sfx{IV}
%% \author*[1,2]{\fnm{Joergen W.} \spfx{van der} \sur{Ploeg} 
%%  \sfx{IV}}\email{iauthor@gmail.com}
%%=============================================================%%

% \author*[1]{\fnm{Roland} \sur{Schurig}}\email{roland.schurig@iat.tu-darmstadt.de}
\author*[1]{\fnm{Roland} \sur{Schurig}}\email{roland.schurig@iat.tu-darmstadt.de}
% \equalcont{These authors contributed equally to this work.}

\author[2]{\fnm{Pieter} \spfx{van} \sur{Goor}}\email{p.c.h.vangoor@utwente.nl}
% \equalcont{These authors contributed equally to this work.}

\author[3]{\fnm{Karl} \sur{Worthmann}}\email{karl.worthmann@tu-ilmenau.de}
% \equalcont{These authors contributed equally to this work.}

\author*[1]{\fnm{Rolf} \sur{Findeisen}}\email{rolf.findeisen@iat.tu-darmstadt.de}
% \equalcont{These authors contributed equally to this work.}

% \affil*[1]{\orgdiv{Department}, \orgname{Organization}, \orgaddress{\street{Street}, \city{City}, \postcode{100190}, \state{State}, \country{Country}}}
\affil[1]{\orgdiv{Control and Cyber-Physical Systems Laboratory}, \orgname{Technische Universität Darmstadt}}

\affil[2]{\orgdiv{Robotics and Mechatronics group, Faculty of Electrical Engineering,  Mathematics, and Computer Science (EEMCS)}, \orgname{University of Twente}}

\affil[3]{\orgdiv{Optimization-based Control Group, Institute of Mathematics}, \orgname{Technische Universität Ilmenau}}

%%==================================%%
%% Sample for unstructured abstract %%
%%==================================%%

\abstract{
    Extended dynamic mode decomposition (EDMD) is a powerful tool to construct linear predictors of nonlinear dynamical systems by approximating the action of the Koopman operator on a subspace spanned by finitely many observable functions. 
    However, its accuracy heavily depends on the choice of the observables, which remains a challenge. 
    We propose a systematic framework to identify and shape observable dictionaries, reduce projection errors, and achieve approximately invariant subspaces. 
    To this end, we leverage optimisation on the Grassmann manifold and exploit inherent geometric properties for computational efficiency. 
    Numerical results demonstrate improved prediction accuracy and efficiency. 
    In conclusion, we propose a novel approach to efficiently shape the Koopman dictionary using differential-geometric concepts for optimisation on manifolds resulting in enhanced data-driven Koopman surrogates for nonlinear dynamical systems. %Potential applications span from controller design to robotics and fluid dynamics. 
}

\keywords{Extended dynamic mode decomposition, Koopman operator, invariant subspace, optimisation on manifolds, Grassman manifold}

%%\pacs[JEL Classification]{D8, H51}

%%\pacs[MSC Classification]{35A01, 65L10, 65L12, 65L20, 65L70}

\maketitle

\section{Introduction}\label{sec:introduction}

Data-driven techniques for modeling, analysis, and control of nonlinear systems have attracted considerable attention in recent years; see, e.g., \cite{FaulOu23,MartScho23,nguyen2023lmi}.
A method which has proven particularly useful for complex dynamical systems is extended dynamic mode decomposition (EDMD~\cite{Williams2015}) in the Koopman framework; see, e.g., \cite{rowley2009spectral,mezic2013analysis,MaurSusu20} and the recent overview paper~\cite{strasser2025overview} with a particular focus on controller design with closed-loop guarantees. 
B.O.\ Koopman's key idea~\cite{Koop31} is to replace the nonlinear dynamics by linear dynamics of real- or complex-valued functions called observables. 
Herein, the observables are mapped using the linear, but infinite-dimensional Koopman operator.  
Hence, a data-driven surrogate is computed on a lifted state space using EDMD, an efficient numerical algorithm to approximate the action of the Koopman operator on a given subspace spanned by finitely many observables, such that the dynamical system can be analysed using tools from linear systems theory, see~\cite{BrunKutz22}.

In the infinite-observable and infinite-data limit, it was shown that the Koopman operator is recovered~\citep{Korda2018}, see also~\cite{colbrook2022foundations} and~\cite{colbrook2024limits}.
Further, data-driven dictionaries are used in kernel EDMD~\cite{klus2020kernel,PhilScha24:kernel}. 
Then, Koopman invariance of suitably chosen reproducible kernel Hilbert spaces generated, e.g., by Wendland kernels~\cite{wendland2004scattered}, can be leveraged to derive uniform bounds on the full approximation error as first shown in~\cite{KohnPhil24}. 
However, such an invariance cannot be expected for finitely spanned subspaces. 
A possible remedy are eigenfunctions; see, e.g., \cite{mezic2005spectral,mauroy2016global,Brunton2016} and~\cite{mezic2020spectrum} for an in-depth treatment, see also~\cite{KordMezi20} for their optimal construction. However, in practice, observables typically correspond to sensors and the respective data samples are real-world measurements such that this approach may be prohibitive. 
Then, reprojection techniques as proposed in~\citep{GoorMaho23,van2025maximum} are required to ensure consistency of the lifted state. 

In conclusion, the number of steps, which can be reliably predicted in an auto-regressive manner, heavily depends on the choice of the observables. 
%For example, if the subspace is Koopman-invariant, reprojections are not needed. 
Our goal is to determine an approximately Koopman-invariant subspace encompassing an a-priori given set of observables, e.g., some sensors/observables of particular interest.
However, while adding further observables seemingly reduces a potential lack of invariance, the risk of overfitting increases. 
On the one hand, this may deteriorate the generalization capacity, i.e., the approximation quality on parts of the state space not present in the training data. 
On the other hand, the required amount of data in the EDMD algorithm quadratically grows in the number of observables, see, e.g., \cite{Williams2015,Zhang2023,NuskPeit23}.
Furthermore, removing obsolete observables may lead to improved representation of system-inherent properties, e.g., translational or rotational invariances in robotics~\cite{rosenfelder2024data}. 
Hence, we aim at resolving the trade-off between invariance according to the training data, maintaining numerical efficiency, and representation of key characteristics of the underlying dynamical system.
The latter may, e.g., indicate the choice of suitable classes of observables, e.g., monomials or Fourier modes. 

We partition the set of observables into two sets: 
The first contains the designated observables of interest, e.g., the full state in control or a particular set of sensors. 
The second set, which may be determined by deep-learning~\cite{Lusch2018,yeung2019learning,johnson2025heterogeneous}, provides additional observables that may be added to improve the prediction capacity by rendering the overall subspace (approximately) invariant. 
%Here, in view of the fact that the data requirements are rapidly growing in the number of observables, we only want to augment the first set by those observables really contributing. 
%\roland{For the above reasons, we want to systematically \emph{exclude} observables that deteriorate the invariance of a subspace or the convergence of numerical algorithms. }
To this end, we set up an optimisation problem over \emph{subspaces} of a given (dictionary) size and derive an objective function encoding invariance as performance measure. 
Then, using differential-geometric arguments, we show that we may optimize on the Grassmann (rather than on the Stiefel) manifold to provide a trust-region algorithm to efficiently solve the rank-constrained optimisation problem. 
We illustrate the effectiveness of the proposed approach by exemplarily solving the task of rendering a dictionary approximately Koopman invariant by determining the best selection of an a-priori fixed number of observables out of a large class of potential observable functions. 
Hereby, we successfully incorporate differential-geometric arguments and tools to render the proposed approach comprehensable such that structural insights can be inferred from the chosen set of observables. 
A key feature of the proposed optimisation technique on the Grassmanian is its flexibility: It should, \eg also be applicable to more elaborated objective functions encoding completely different optimisation criteria. 
\\

\noindent \textbf{Related work.} In linear inverse modeling (LIM), similar questions were already asked and led to the development of optimal mode decomposition, see, e.g., \cite{kwasniok2022linear} and the references therein. Herein, the \textit{size of the subspace to be optimized} is penalized via a regularization term as originally proposed in~\cite{goulart2012optimal,wynn2013optimal} and successfully applied in~\cite{yang2020optimal}, see also the preprint~\cite{mieg2025optimal} for a recent extension of optimal mode decomposition towards systems with input analogously to EDMD with control as proposed in~\cite{Brunton2016}.
In~\cite{newton2023manifold}, the manifold-optimisation problem was tackled using a gradient-descent algorithm with less emphasize on the underlying structure and without a clear link to Koopman operator theory. A similar comment applies to~\cite{sashittal2018low}. 
However, to the best of our knowledge, the combination of differential-geometric techniques (optimisation on the Grassmannian) for subspace optimisation and a proper link to regression problem EDMD embedded in Koopman framework is not yet fully explored.
\\

\noindent The paper is organized as follows: \secref{sec:koopman} reviews state prediction in the Koopman framework and clearly introduces why we frame the prediction problem as one over \emph{subspaces}, and not bases. 
The research problem is defined in \secref{sec:invariant_subspace}, and \secref{sec:subspace_edmd} relates extended dynamic mode decomposition in the Koopman framework to the research problem.
\secref{sec:optimal_subspace} presents an algorithm for identifying key dictionary elements, while \secref{sec:grassmann_optimisation} provides the geometric foundations for implementing the algorithm. Simulation results are shown in \secref{sec:simulation}, followed by conclusions in \secref{sec:conclusion}.
\\

\noindent\textbf{Notation.}
Let $\mathbb{R}$ denote the field of real numbers. 
The vector space of real $(n \times m)$-matrices is $\rmspace{n}{m}$. 
We assume that $\rmspace{m}{n}$ is additionally equipped with the inner product $\iprod{A}{B} \coloneqq \operatorname{tr}(A^\top B)$, which induces the (Frobenius) norm $\| A \| \coloneqq \sqrt{\iprod{A}{A}}$.
For $n=m$, the subset of all invertible $(n \times n)$-matrices is the general linear group $\gl{n}$. 
If $V$ is any real vector space, the real vector space of all linear maps on $V$ is $\End{V}$. 
We use the Einstein summation convention to streamline notation, \ie we write simplified $x^i v_i = \sum_{i=1}^n x^i v_i$.
For any map $f \colon M \to N$, its restriction to $U \subset M$ is $\left. f \right|_U$.

\section{The Koopman Framework}
\label{sec:koopman}

We consider discrete-time autonomous dynamical systems
\begin{align} \label{eq:sys}
    x(t+1) &= f(x(t)), & 
    x(0) &= x_0,
\end{align}
with $f \colon \mathbb{R}^n \rightarrow \mathbb{R}^n$. 
Analogously to~\cite{Zhang2023}, let~$X \subseteq \mathbb{R}^n$ be non-empty, compact and forward invariant w.r.t.\ the dynamics~\eqref{eq:sys} to streamline the presentation, see~\cite{KohnPhil24} for a precise treatment of the general case. 
For each initial state $x_0 \in X$, the corresponding state trajectory $x(\cdot,x_0) \colon \mathbb{N}_0 \to X$ is iteratively generated by
\begin{align*}
    x(0,x_0) &= x_0, &
    &\textnormal{and} &
    \forall t \in \mathbb{N}_0 \colon x(t+1,x_0) = f(x(t,x_0)) .
\end{align*}
The transition map $f$ may be unknown, but it is possible to obtain a trajectory for each initial state by simulation or measurement. 

Let $H$ coincide with the Hilbert space~$L^2(X;\mathbb{R})$. We define the linear \emph{Koopman operator} $K : H \to H$ by the identity
\begin{equation*}
    \forall x \in X \colon (K \psi)(x) \coloneqq \psi (f(x))
\end{equation*}
for all $\psi \in H$.
The Koopman operator $K$ is linear but infinite dimensional.

Consider an $M$-dimensional subspace $V \subset H$ with $M < \infty$.
If $K \psi \in V$ for all $\psi \in V$, then we call the subspace $V$ \emph{Koopman invariant}, and we can represent the restriction $\left. K \right|_V$ of $K$ on $V$ exactly by using an $(M \times M)$-matrix. 
For a general finite-dimensional subspace $V \subset H$, even one that is not invariant, a \emph{compression} of $K$ on $V$ is defined as
\begin{equation*}
    C_V \coloneqq P_{V} \left. {K} \right|_{V} \in \End{V} ,
\end{equation*}
where $P_V : H \to V$ is a (not necessarily orthogonal) projection onto $V$.
Note that $C_V = K |_V$ if and only if $V$ is Koopman invariant.
If $V$ is not Koopman invariant, then the compression will be subject to a projection error, see~\cite{Zhang2023}.
For further details, we refer the reader to \appref{app:computing_compression}.

\subsection{Koopman Linear Systems} \label{sec:kls}

Assume that $B = (\psi_1, \dots, \psi_M)$ is a basis for $V$. 
Within the Koopman framework, this is often referred to as a \emph{dictionary}. 
Since $V$ has finite dimension, the compression $C_V \in \End{V}$ can be represented by its action on the elements in the basis $B$, which yields the coefficients
\begin{equation} \label{eq:compression_dictionary_basis}
    \forall\,j \in \{1, \dots, M \} \colon C_V \psi_j \eqqcolon a_j^k \psi_k \in V 
\end{equation}
Then, tacitly identifying $\mathbb{R}^M \cong \rmspace{M}{1}$, the \emph{matrix $K_B$ of $C_V$ relative to $B$} is
\begin{equation*}
    K_B = 
    \Matrix{
        a_1^1 & \dots & a_M^1 \\
        \vdots & \ddots & \vdots \\
        a_1^M & \dots & a_M^M 
    } .
\end{equation*}
This means that for any $\psi \in V$, the vector $C_V \psi \in V$ can be obtained by first expanding $\psi = v^j \psi_j \in V$ and then computing $w = (w^1, \dots, w^M) = K_B v \in \mathbb{R}^M$, which gives $C_V \psi = w^j \psi_j$.

As a next step, we construct the \emph{lift relative to $B$} as
\begin{equation} \label{eq:original_lift}
	\Psi \colon \begin{cases*}
		\mathbb{R}^n \to \operatorname{im} \Psi \\
		x \mapsto \Psi(x) \coloneqq (\psi_1(x), \dots, \psi_M(x))
	\end{cases*}
\end{equation}
Using the lift, for each $x_0 \in X$ we obtain the approximation
\begin{equation*}
    (\Psi \circ f)(x_0) 
    = \Matrix{
        (K \psi_1)(x_0) \\ \vdots \\ (K \psi_M)(x_0)
    }
    \approx \Matrix{
        (C_V \psi_1)(x_0) \\ \vdots \\ (C_V \psi_M)(x_0)
    } = K_B^\top \Psi(x_0),
\end{equation*}
with equality if and only if $V$ is Koopman invariant. 
The approximation quality depends on the invariance of $V$ with respect to $K$.
Suppose that $K_B^\top \Psi(x_0)$ is \enquote{close} to $(\Psi \circ f_0)(x_0) = \Psi(x(1,x_0))$. Then, we further approximate $\Psi(x(2,x_0)) = (\Psi \circ f)(f(x_0))$ with $K_B^\top (K_B^\top \Psi(x_0)) = (K_B^\top)^2 \Psi(x_0)$; and so on, inductively. 

The \emph{Koopman linear system relative to $B$} is thus defined as
\begin{align} \label{eq:kls}
    z(t+1) &= K_B^\top z(t), &
    z(0) &= \Psi(x_0) .
\end{align}
Note that, while the numerical values of $K_B$ depend on the chosen basis $B$, the value of $C_V \psi$ is invariant, and hence the system \eqref{eq:kls} is equivariant to the choice of basis. 
Given the Koopman linear system w.r.t.\ $B$, we denote the resulting trajectory by $z_B(\cdot, x_0)$ for $x_0 \in X$ to highlight its dependence on $B$, and speak of the \emph{Koopman prediction relative to~$B$}. 

Suppose that $E = (\varphi_1, \dots, \varphi_M)$ is another basis of the subspace~$V$. 
For each $j \in \{1,\dots,M\}$, we may uniquely expand $\varphi_j = p_j^k \psi_k$, and collect the components in the invertible \emph{change-of-basis matrix from $B$ to $E$}
\begin{equation} \label{eq:change_of_basis_matrix}
    P \coloneqq \Matrix{
        p_1^1 & \dots & p_1^M \\
        \vdots & \ddots & \vdots \\
        p_M^1 & \dots & p_M^M
    } \in \gl{M},
\end{equation}
for which the following statement holds, see \appref{app:compression_change_of_basis} for a proof.
\begin{lemma} \label{lemma:koopman_predicitions_basis}
    Let $V$ be a finite-dimensional subspace of $H$.
    Let $B = (\psi_1, \dots, \psi_M)$ and $E = (\varphi_1, \dots, \varphi_M)$ be two bases for $V$. 
    Let $P \in \gl{M}$ be the change-of-basis matrix from $B$ to $E$.
    Then, for each $x_0 \in X$, the Koopman predictions $z_B(\cdot,x_0)$ relative to $B$ and $z_E(\cdot, x_0)$ relative to $E$ satisfy $z_E(\cdot,x_0) = P z_B(\cdot, x_0)$.
\end{lemma}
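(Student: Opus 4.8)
The plan is to reduce the statement to two elementary algebraic facts about change of basis and then conclude by a telescoping identity (or, equivalently, a one-line induction on $t$). Throughout, let $\Phi \coloneqq (\varphi_1,\dots,\varphi_M)$ denote the lift relative to $E$, constructed as in \eqref{eq:original_lift}.

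First I would relate the two lifts. By the definition of the change-of-basis matrix $P$ in \eqref{eq:change_of_basis_matrix} we have $\varphi_j = p_j^k\psi_k$, so evaluating at an arbitrary $x\in X$ gives $\varphi_j(x) = p_j^k\psi_k(x) = (P\Psi(x))_j$; hence $\Phi = P\Psi$ pointwise on $X$. In particular the two Koopman linear systems \eqref{eq:kls} have initial conditions related by $P$: $z_E(0,x_0) = \Phi(x_0) = P\Psi(x_0) = P z_B(0,x_0)$, which settles the case $t=0$.

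Second I would relate the two system matrices, establishing $K_E^\top = P K_B^\top P^{-1}$. Since the compression $C_V\in\End{V}$ is intrinsic to the subspace $V$ — only the matrix representing it depends on the basis, as noted after \eqref{eq:kls} — I can compute $C_V\varphi_j$ two ways: expanding $\varphi_j = p_j^k\psi_k$ and invoking \eqref{eq:compression_dictionary_basis} gives $C_V\varphi_j = p_j^k a_k^l\psi_l$, whereas the matrix $K_E$ is defined through $C_V\varphi_j = b_j^m\varphi_m = b_j^m p_m^l\psi_l$. Matching the coefficients of $\psi_l$ yields the index identity $p_j^k a_k^l = b_j^m p_m^l$, which — recalling from \secref{sec:kls} that the $j$-th column of $K_B$ collects the numbers $a_j^k$, i.e. $(K_B^\top)_{jk} = a_j^k$, and likewise for $K_E$ — translates into the matrix identity $P K_B^\top = K_E^\top P$, hence $K_E^\top = P K_B^\top P^{-1}$.

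Finally, from \eqref{eq:kls} one reads off the closed forms $z_B(t,x_0) = (K_B^\top)^t\Psi(x_0)$ and $z_E(t,x_0) = (K_E^\top)^t\Phi(x_0)$ for all $t\in\mathbb{N}_0$. Substituting the conjugation relation and telescoping, $(K_E^\top)^t = (P K_B^\top P^{-1})^t = P(K_B^\top)^t P^{-1}$, so $z_E(t,x_0) = P(K_B^\top)^t P^{-1}\,P\Psi(x_0) = P(K_B^\top)^t\Psi(x_0) = P z_B(t,x_0)$, which is the assertion. (Alternatively, one proves $z_E(\cdot,x_0) = Pz_B(\cdot,x_0)$ by induction on $t$: the base case is the previous paragraph, and the inductive step is $z_E(t+1) = K_E^\top z_E(t) = P K_B^\top P^{-1}\,P z_B(t) = P K_B^\top z_B(t) = P z_B(t+1)$.) The only genuinely delicate point is the index bookkeeping in the middle step — keeping the Einstein up/down indices and the resulting transposes consistent with the column convention adopted for $K_B$ — while everything else is mechanical; this is presumably also the content of \appref{app:compression_change_of_basis}.
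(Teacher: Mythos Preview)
Your proof is correct and follows essentially the same route as the paper's own argument in \appref{app:compression_change_of_basis}: both establish $\Phi = P\Psi$ for the initial condition, derive the conjugation $K_E^\top = P K_B^\top P^{-1}$ by computing $C_V\varphi_j$ via the $B$-basis, and then conclude by propagating along the linear dynamics (the paper phrases this last step as ``$w(\cdot,x_0)$ and $z_E(\cdot,x_0)$ satisfy the same difference equation and initial value,'' which is exactly your inductive alternative). Your index bookkeeping in the middle step matches the paper's.
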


\subsection{State Predictions in the Koopman Framework} \label{sec:state_predicitions_koopman}

We wish to make predictions in the \emph{state space}: Let $x_0 \in X$ be arbitrary and consider a basis $B = (\psi_1, \dots, \psi_M)$ for $V$, together with the lift $\Psi \colon \mathbb{R}^n \to \operatorname{im} \Psi$ relative to $B$.
After even a single timestep, unless the subspace $V$ spanned by $B$ is Koopman invariant, then there is no guarantee that $\Psi(\hat{x}_1) = K_B^\top \Psi(x_0)$ admits a solution $\hat{x}_1 \in \mathbb{R}^n$. 
Hence, it is not clear how $K_B^\top \Psi(x_0)$ should be mapped back to a state. 
To circumvent this problem, let $\pi_\Psi \colon \mathbb{R}^M \to \operatorname{im} \Psi$ be such that its restriction to $\operatorname{im} \Psi \subset \mathbb{R}^M$ is the identity map. 
We refer to \cite{GoorMaho23,van2025maximum} for a more elaborate discussion.

Assuming that $\Psi$ is one-to-one, an approximation $\hat{x}(\cdot, x_0)$ for $x(\cdot,x_0)$ can then be obtained as
\begin{equation} \label{eq:state_estimate}
    \forall t \in \mathbb{N}_0 \colon \hat{x}(t,x_0) \coloneqq (\Psi^{-1} \circ \pi_\Psi)(z_B(t,x_0)) .
\end{equation}
This is illustrated in \figref{fig:koopman_predictions}. 
\begin{figure}[]
    \centering
    \begin{tikzpicture}[>=latex, x=2cm,y=2cm]
        \begin{scope}[xshift=-3cm,x=2cm]
            \draw[->] (-1,0) -- (1,0);
            
            \draw (-0.9,2pt) -- node[below] (x0) {\footnotesize $x_0$} (-0.9,-2pt);
            \draw (0.81,2pt) -- node[below] (x1) {\footnotesize $x(1,x_0)$} (0.81,-2pt);
            \draw (0.2854,2pt) -- node[below] (xh1) {\footnotesize $\hat{x}(1,x_0)$} (0.2854,-2pt);
        \end{scope}
        \begin{scope}[xshift=3cm]
            \draw[->] (-1.25,0) -- (1.25,0);
            \draw[->] (0,-1.25) -- (0,1.25);
            
            \draw[mygreen] (0,0) circle [radius=1];
            
            \node at (0.6216,-0.7833)  (z0) [circle, draw, inner sep=1pt, fill=white] {};
            \node at ([xshift=3.75em,yshift=0.00em]z0) [] {\footnotesize $z_B(0,x_0) = \Psi(x_0)$};
            
            \node at (0.6545,0.1949)  (z1) [circle, draw, inner sep=1pt, fill=white] {};
            \node at ([xshift=-0.5em,yshift=0.75em]z1) [] {\footnotesize $z_B(1,x_0)$};
            
            \node at (0.6895,0.7243) (w1) [circle, draw, inner sep=1pt, fill=white] {};
            \node at ([xshift=2.25em,yshift=0.00em]w1) [] {\footnotesize $\Psi(x(1,x_0))$};
            
            % project approximation on M
            \node at (0.9596,0.2815)  (zb1) [circle, draw, inner sep=1pt, fill=white] {};
            \node at ([xshift=2.5em,yshift=0.5em]zb1) [] {\footnotesize $\pi_{\Psi}(z_B(1,x_0))$};
            
            \node at (-0.625,0.5) [mygreen] {\footnotesize $\operatorname{im}(\Psi)$};
        \end{scope}
        \draw[->, black!50, dashed] ([yshift=3pt]x0.south) to[bend right] node[above] {\footnotesize $\Psi$} (z0);

        \draw[->, black!50, dashed] ([yshift=3pt,xshift=1pt]x1.north) to[bend left] node[above] {\footnotesize $\Psi$} (w1);
        \draw[->, dashed, black!50] ([yshift=3pt]x0.north) to[bend left] node[above,pos=0.5] {\footnotesize $f$} ([xshift=-1.5pt,yshift=3pt]x1.north);
        \draw[->,dashed,black!50] (zb1.west) to[bend left] node[above, pos=0.6] {\footnotesize $\Psi^{-1}$} (xh1.south);
        \draw[->, myorange, dashed] (z0) -- node[left] {\footnotesize $K_B^\top$} (z1);
        \draw[->,myblue, dashed] (z1) -- (zb1);
        \draw[myred, dotted] (z1) -- (w1);
    \end{tikzpicture}
    \caption{Estimation in lifted space for $n=1$, $M=2$ and $\Psi \colon x \mapsto (\cos(x), \sin(x))$.}
    \label{fig:koopman_predictions}
\end{figure}

Within the Koopman framework, a specific projection is often used, which is particularly suited for predictions in state space. 
This projection, however, only works for a certain class of finite-dimensional subspaces of $H$, which we introduce next.

For $k \in \{1, \dots, n \}$, introduce the $k$-th \emph{coordinate function} as
\begin{equation*}
    \theta_k \colon \begin{cases*}
        \mathbb{R}^n \to \mathbb{R} \\
        x = (x_1, \dots, x_n) \mapsto x_k 
    \end{cases*}
\end{equation*}
If a subspace $V \subset H$ contains all coordinate functions, we say that $V$ is a \emph{coordinate subspace}. 
These are the finite-dimensional subspaces we are working with. 
Usually, a basis for a coordinate subspace is chosen that contains all coordinate functions. 
However, in our algorithms, we do \emph{not} use such a basis for numerical reasons. 
Therefore, we introduce a projection that works for an arbitrary basis for a coordinate subspace.

Let $B = (\psi_1, \dots, \psi_M)$ be an arbitrary basis for a coordinate subspace $V$. 
Then, for each $k \in \{1, \dots, n\}$, there exist real numbers $c_k^j$ such that $\theta_k = c_k^j \psi_j$.
To define $\pi_\Psi$, we collect the components relative to the basis elements in $B$ in the matrix
\begin{equation*}
    \Pi_B \coloneqq \Matrix{
        c_1^1 & \dots & c_1^M \\
        \vdots & \ddots & \vdots \\
        c_n^1 & \dots & c_n^M
    } \in \rmspace{n}{m} .
\end{equation*}
We refer to $\Pi_B$ as the \emph{coordinate matrix relative to $B$}. 
Using $\Pi_B$, it can be shown that the lift $\Psi$ relative to $B$ is one-to-one, \cf \appref{app:coordinate_projection}.

Then, we define the \emph{coordinate projection} $\pi_\Psi \colon \mathbb{R}^M \to \operatorname{im} \Psi$ by
\begin{equation*}
    \pi_\Psi(z) \coloneqq \Psi(\Pi_B z) ,
\end{equation*}
which indeed satisfies $\left. \pi_\Psi \right|_{\operatorname{im} \Psi} = \mathantt{id}_{\operatorname{im} \Psi}$. 
We establish this in \appref{app:coordinate_projection} as well.

Consider again the Koopman linear system \eqref{eq:kls}.
From the above definition of the coordinate projection, we obtain the estimate $\hat{x}(\cdot, x_0)$ for $x(\cdot,x_0)$ from $z_B(\cdot,x_0)$ via the linear map
\begin{equation} \label{eq:kls_state_predicition}
    \hat{x}(\cdot, x_0) = (\Psi^{-1} \circ \pi_\Psi)(z_B(\cdot, x_0)) = \Pi_B z_B(\cdot, x_0) .
\end{equation}
\begin{remark}
    In the simple case where for each $k \in \{1, \dots, n \}$, the element $\psi_k$ of the basis $B$ is given by $\theta_k$, the coordinate matrix relative to $B$ is given by
    \begin{equation*}
        \Pi_B = \Matrix{I_n & 0} .
    \end{equation*}
    Hence, in this case for each $z \in \mathbb{R}^M$, the state $(\Psi^{-1} \circ \pi_{\Psi})(z) \in \mathbb{R}^n$ simply corresponds to the first $n$ components of $z$. 
    We call such a basis a \emph{coordinate basis} for $V$.
\end{remark}
The construction of $\Pi_B$ relies on the specific basis $B$ used to span $V$.
However, our previous result readily allows us to show that this state prediction is invariant under a change of basis, see \appref{app:coordinate_projection} for a proof.
\begin{corollary} \label{coro:state_prediction}
    Let $V \subset H$ be a finite-dimensional coordinate subspace of $H$. 
    Let $B = (\psi_1, \dots, \psi_M)$ and $E = (\varphi_1, \dots, \varphi_M)$ be two arbitrary bases for $V$. 

    Then, for each $x_0 \in X$, the state prediction $\Pi_B z_B(\cdot,x_0)$ based on the Koopman prediction relative to $B$ and the state prediction $\Pi_E z_E(\cdot,x_0)$ based on the Koopman prediction relative to $E$ coincide.
\end{corollary}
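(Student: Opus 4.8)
The plan is to reduce the statement to a single matrix identity relating the two coordinate matrices through the change-of-basis matrix, and then to verify that identity by expanding everything in the basis $B$ and invoking uniqueness of coefficients.

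First I would set up the reduction. Let $P \in \gl{M}$ be the change-of-basis matrix from $B$ to $E$, as in \eqref{eq:change_of_basis_matrix}. By \lemmaref{lemma:koopman_predicitions_basis}, the Koopman predictions are related by $z_E(\cdot,x_0) = P\, z_B(\cdot,x_0)$ for every $x_0 \in X$. Consequently,
\begin{equation*}
    \Pi_E\, z_E(t,x_0) = \Pi_E P\, z_B(t,x_0) \qquad \text{for all } t \in \mathbb{N}_0 ,
\end{equation*}
so the claimed equality $\Pi_E z_E(\cdot,x_0) = \Pi_B z_B(\cdot,x_0)$ follows at once from the basis-independent identity $\Pi_B = \Pi_E P$.

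It then remains to prove $\Pi_B = \Pi_E P$. Since $V$ is a coordinate subspace, each coordinate function $\theta_k$ lies in $V$ and therefore admits the expansions $\theta_k = c_k^j \psi_j$ and $\theta_k = d_k^j \varphi_j$, where $(c_k^j)$ and $(d_k^j)$ are precisely the entries of $\Pi_B$ and $\Pi_E$. Substituting the change-of-basis relation $\varphi_j = p_j^i \psi_i$ into the second expansion yields $\theta_k = d_k^j p_j^i \psi_i$. Comparing with $\theta_k = c_k^i \psi_i$ and using that $(\psi_1,\dots,\psi_M)$ is linearly independent — so that the coefficients of $\theta_k$ in the basis $B$ are unique — gives $c_k^i = d_k^j p_j^i$ for all $k$ and $i$. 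Reading this componentwise identity as a matrix product (lower index numbering rows, upper index numbering columns, consistently with \eqref{eq:change_of_basis_matrix} and the definition of $\Pi_B$) is exactly $\Pi_B = \Pi_E P$, which completes the argument.

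I do not expect a genuine obstacle here: the corollary is essentially bookkeeping once \lemmaref{lemma:koopman_predicitions_basis} is available. The one point requiring care is consistency of the index and transposition conventions — one must invoke \lemmaref{lemma:koopman_predicitions_basis} with $P$ oriented as the change of basis \emph{from $B$ to $E$} (not its inverse), and interpret $c_k^i = d_k^j p_j^i$ with the same row/column convention used to define $\Pi_B$, $\Pi_E$, and $P$; otherwise a spurious transpose creeps in. Everything else — existence of the coefficients $c_k^j$ and $d_k^j$, and the propagation of the identity through all time steps $t$ — is immediate.
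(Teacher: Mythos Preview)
Your proposal is correct and follows essentially the same route as the paper: invoke \lemmaref{lemma:koopman_predicitions_basis} to get $z_E = P z_B$, then establish the matrix identity $\Pi_B = \Pi_E P$ by expanding the coordinate functions in both bases and comparing coefficients. The paper's proof is identical in structure and detail, including the same index computation $c_k^i = d_k^j p_j^i$.
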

For this invariance reason, for each $x_0 \in X$ we refer to $\hat{x}(\cdot, x_0)$ as the \emph{state prediction on $V$ (from $x_0$)}.

To conclude this section, while for each initial state $x_0 \in X$ the predictions $z_B(\cdot,x_0)$ and $z_E(\cdot,x_0)$ in the \emph{lifted space}, obtained from different bases $B$ and $E$ of the same coordinate subspace $V \subset H$, do depend on the chosen basis, there exists a {canonical} way to obtain predictions in state space.
This prediction relies on the matrix that relates the basis elements to the coordinate functions, the use of which renders the state prediction basis-invariant.

\section{Invariant Subspaces for Prediction}
\label{sec:invariant_subspace}

We are now prepared to introduce ideas for shaping a Koopman dictionary.
For this, let $V \subset H$ be a finite-dimensional coordinate subspace. 
Our goal is to find a proper coordinate subspace $W \subset V$ and then apply the Koopman framework for basis-independent prediction from \secref{sec:state_predicitions_koopman} \emph{to $W$ instead of $V$}. 
Ideally, by passing from $V$ to $W$, we exclude the components from $V$ that are irrelevant to the system's dynamics and geometry, e.g., due to translational invariances in non-holonomic systems, see~\cite{rosenfelder2024data}.

Initially, we make $V$ as \enquote{rich} as possible by selecting a basis~$B$ of the high-dimensional subspace~$V$, \eg by relying on a certain class of basis functions. 
We do have in mind that possibly not all elements in $B$ are required or even helpful. 
For some components of $V$, however, we might have some prior knowledge or intuition and wish to keep them in $W$ in any case.

To make that precise, suppose that $T$ is a coordinate subspace of $V$. 
The idea is to encode all prior knowledge we might have about the system into the subspace $T$ and to then ensure that $T \subset W$ holds while constructing $W$. 
Note that this will also ensure that $W$ is a coordinate subspace. 
The subspace $T$, however, might be far away from being Koopman invariant, which is why we aim to identify in the \enquote{remaining} components of $V$ the ones that best compensate for the projection error on~$T$, resulting from a lack of invariance.

To make that precise, we first write $V$ as the direct sum of $T$ and another subspace $R$ of $V$. 
Based on that, we present the following general framework.
\begin{enumerate}
    \item[(i)] Fix a dimension $0 < r < \dim R$.  
    \item[(ii)] Find a subspace $S \subset R$ of dimension $r$ such that for each $x_0 \in X$ the state prediction on $W \coloneqq T \oplus S \subset V$ from $x_0$ is \enquote{as good as possible}.
\end{enumerate}
The most important step is clearly (ii), where our design choice is the subspace $S$ of $R$, which in turn defines $W$.
The setup is illustrated in \figref{fig:sketch_subspaces}.
\begin{figure}[htb]
    \centering
    \begin{tikzpicture}[>=latex, x=1cm]
        \begin{scope}[xshift=-3.5cm]
            \draw (0,0) rectangle (5,2);
            \draw (2,0) -- (2,2);
    
            \node at (4.9,2.25) (V) {$V$};
            \node at (1,1) (T) {$T$};
            \node at (3.5,1) (R) {$R$};
        \end{scope}
        \begin{scope}[xshift=3.5cm]
            \draw (2,0) -- (5,0) -- (5, 1.25);
            \draw (2,1.25) -- (2,2);
            \draw[myorange] (0,0) -- (0,2) -- (5,2) -- (5,1.25) -- (2,1.25) -- (2,0) -- cycle;
    
            \node at (4.9,2.25) (V) {$V$};
            \node[myorange] at (0.2,2.25) (W) {$W$};
            \node at (1,1) (T) {$T$};
            \node at (3.5,1.625) (S) {$S$};
            \node at (3.5,0.625) (Q) {$Q$};
        \end{scope}
        \draw[myblue,->] (1.75,1) -- node[above] {\footnotesize Choice of} node[below] {\footnotesize $S \subset R$} (3.25,1);
    \end{tikzpicture}
    \caption{Sketch of $V = T \oplus R$ and $W = T \oplus S \subseteq V$, with $R = S \oplus Q$.}
    \label{fig:sketch_subspaces}
\end{figure}
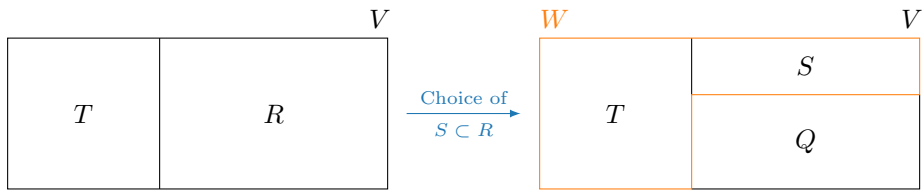

We now discuss how the predictive quality of the reduced-order subspace $W = T \oplus S$ can be evaluated.
Recall from \secref{sec:kls} that, by choosing \emph{any} basis for $W$, we can construct a Koopman linear system of the form \eqref{eq:kls} relative to that basis.
Using \cororef{coro:state_prediction}, we obtain \emph{basis-independent} state predictions from this basis-dependent Koopman linear system. 

The degree of freedom that we have in designing $W = T \oplus S$ is the choice of the $r$-dimensional subspace $S$, since we assume that $T$ is fixed.
To emphasise the dependence on the subspace $S$, we denote the state prediction on $W$ from $x_0 \in X$ by $\hat{x}(\cdot, x_0; S)$.

As with numerical solvers for initial value problems, one cannot generally expect a reasonable approximation accuracy for arbitrarily long horizons.
However, this may be possible on \emph{finite} horizons of length $N \in \mathbb{N}$, which is the focus of this work.

To measure the distance between two maps $\xi, \zeta$ from $\mathbb{N}_0$ to $\mathbb{R}^n$ on the finite horizon $N$, in this work we use the \emph{performance measure} $d_N$ defined by
\begin{equation} \label{eq:trajectory_distance}
    d_N(\xi,\zeta) \coloneqq \left\| \left( \delta(0), \delta(1), \dots , \delta(N) \right) \right\|_2 = \sum_{i=0}^N \| \delta(i) \|_2 = \sum_{i=0}^N \sqrt{\sum_{j=1}^n \delta_j(i)^2},
\end{equation}
with $\delta(\cdot) = \| \xi(\cdot) - \zeta(\cdot)  \|_2$. 
However, other performance measures tailored to specific needs are possible.

To evaluate how well the state prediction in the Koopman framework on the subspace $W = T \oplus S$ approximates the true solution, we are now prepared to introduce
\begin{equation} \label{eq:invariance_measure}
    \mu_N(S) \coloneqq \sup_{x_0 \in X} \{ d_N(x(\cdot,x_0), \hat{x}(\cdot, x_0; S)) \} .
\end{equation}
Note that, if $W = T \oplus S$ is Koopman invariant, then $\mu_N(S) = 0$. 
Otherwise, $\mu_N(S) \geq 0$. 
Hence, $\mu_N$ can be seen as a basis-invariant measure for the invariance of $W$, tailored to the specific task of predicting trajectories in state space.

Finally, we propose the following research problem.
\begin{problem}[Almost Invariant Subspace for Predicitions] \label{prob:invariant_subspace_predicition}
    Given a coordinate subspace $V = T \oplus R$ of $H$ as described above, a prediction horizon $N \in \mathbb{N}$, and a reduced dimension $r \in \mathbb{N}$, find an $r$-dimensional subspace $S \subset R$ such that $\mu_N(S)$ is minimal.
\end{problem}

\section{Computing Compressions from Data} \label{sec:subspace_edmd}

Problem~\ref{prob:invariant_subspace_predicition} can, in general, not be solved analytically to global optimality. 
Hence, in \secref{sec:optimal_subspace}, we propose an optimisation-based framework for finding (suboptimal) solutions. 
In this framework, the subspace $S$ is the \emph{decision variable} of the optimisation problem. 
To set up an objective function for such an optimisation problem, we need to establish a relation between a Koopman linear system \eqref{eq:kls} relative to a basis of $V$ with a Koopman linear system relative to a basis $W$. 
To this end, we set up a data-driven framework, in which this can be done: First, recall the extended dynamic mode decomposition algorithm proposed in~\cite{Williams2015} to compute a compression of the Koopman operator from data, see also \appref{app:computing_compression}, and~\cite{colbrook2024multiverse} for more sophisticated variants of EDMD.
Subsequently, we recall the decomposition of $V$ sketched in \figref{fig:sketch_subspaces} and find an expression for the compression on $W = V \oplus S$, which allows us to set up \probref{prob:invariant_subspace_predicition} as an optimisation problem over $S$.
More specifically, the derivations enable us to formulate an objective function that can be minimised to design the subspace $S$ in \secref{sec:optimal_subspace}. 
To make these optimisation routines efficient, we additionally introduce a basis for $W$ with appealing numerical properties.
Throughout this section, we fix a finite-dimensional subspace $V$ of $H$, and let $B = (\psi_1, \dots, \psi_M)$ be basis for $V$.

\subsection{Extended Dynamic Mode Decomposition}

For the extended dynamic mode decomposition algorithm, we collect \emph{training data}
\begin{equation} \label{eq:edmd_training_data}
    D = \{ (x_i, y_i = f(x_i)) \}_{i=1}^L \subset X \times X
\end{equation}
with $L \geq M$, and where the $x_i$ are drawn i.i.d. 

As a next step, we form the data matrices
\begin{align} \label{eq:edmd_matrices}
    {G}_B &= \Matrix{
        \psi_1({x}_1) & \dots & \psi_1({x}_L) \\
        \vdots & \ddots & \vdots \\
        \psi_M({x}_1) & \dots & \psi_M({x}_L)
    }, &
    S_B &= \Matrix{
        \psi_1({y}_1) & \dots & \psi_1({y}_L) \\
        \vdots & \ddots & \vdots \\
        \psi_M({y}_1) & \dots & \psi_M({y}_L)
    } ,
\end{align}
obtained from the training data $D$ and relative to the basis $B$.
We assume that $G_B$ defined in \eqref{eq:edmd_matrices} has full row rank.

In this case, it can be shown that
\begin{equation*}
    K_B \coloneqq (G_B G_B^\top)^{-1} G_B S_B^\top 
\end{equation*}
is the matrix representation of the EDMD compression
\begin{equation*}
    C_V \coloneqq P_V \left. K \right|_V \End{V} ,
\end{equation*}
where $P_V$ is a projection of $H$ with $\ran P_V = V$, and $\ker P_V$ is determined by the training data $D$. 
We refer to \appref{app:computing_compression} for the details.

Next, we consider a proper subspace $W \subset V$, with $\dim W = \ell$. 
Let $\tilde{B} = (\omega_1, \dots, \omega_\ell)$ be a basis for $W$. 
Because $B$ is a basis for $V$ and $W$ is contained in $V$, we can express the basis vectors of $W$ as a linear combination of the basis vectors of $V$. 
Specifically, we assume
\begin{equation*}
    \forall j \in \{1, \dots, \ell \} \colon \omega_j = \bar{u}_j^k \psi_k ,
\end{equation*}
and collect the components in the matrix
\begin{equation}
    \bar{U} = \Matrix{
        \bar{u}_1^1 & \dots & \bar{u}_\ell^1 \\
        \vdots & \ddots & \vdots \\
        \bar{u}_1^M & \dots & \bar{u}_\ell^M
    } \in \rmspace{M}{\ell} ,
\end{equation}
which has full column rank. 
It follows that the data matrices relative to $\tilde{B}$ are given by
\begin{align*}
    G_{\tilde{B}} &= \bar{U}^\top G_B, &
    S_{\tilde{B}} &= \bar{U}^\top S_B,
\end{align*}
so that the matrix representation $K_{\tilde{B}}$ of the EDMD compression on $W$ relative to $B$ is given by
\begin{equation*}
    K_{\tilde{B}} = (\bar{U}^\top G_B G_B^\top \bar{U})^{-1} \bar{U}^\top G_B S_B^\top \bar{U} ,
\end{equation*}
which has the disadvantage that every time the subspace $W$ is changed -- which will be the case in each step of our optimisation routine -- a different matrix has to be inverted.

To circumvent this problem, we introduce a new basis $E = (\varphi_1, \dots, \varphi_M)$ for $V$, which was also mentioned in \cite{Haseli2023}, and which has appealing numerical properties. 
This new basis will depend both on $B$ and the training data $D$ in \eqref{eq:edmd_training_data}.
The first step in constructing $E$ is to compute the thin QR decomposition of the data matrix $G_B^\top = \hat{Q} \hat{R}$ \cite[Theorem~5.2.3]{Golub2013}, and to use
\begin{equation} \label{eq:qr_transformation}
    P \coloneqq \hat{R}^{- \top} \in \gl{M} ,
\end{equation}
as the change-of-basis matrix from $B$ to $E$. 
This new basis enjoys the numerical advantage of being orthonormal in a desirable sense.
\begin{corollary} \label{coro:data_orthonormal_basis}
    Let $V$ be a finite-dimensional subspace of $H$, and suppose that $B = (\psi_1, \dots, \psi_M)$ is a basis for $V$. 
    Assume that the data matrix $G_B$ relative to $B$ has full row rank. 
    Let $E = (\varphi_1, \dots, \varphi_M)$ be the basis for $V$ such that $P$ defined by \eqref{eq:qr_transformation} is the change-of-basis matrix from $B$ to $E$.
    
    Then, the data matrix $G_E$ relative to $E$ satisfies $G_E G_E^\top = I_M$.
\end{corollary}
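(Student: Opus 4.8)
The plan is to track how the data matrix $G_B$ transforms under the change of basis given by $P$, and then to insert the specific choice $P = \hat R^{-\top}$ coming from the thin QR decomposition $G_B^\top = \hat Q \hat R$. First I would recall the transformation rule for data matrices under a change of basis: if $E = (\varphi_1,\dots,\varphi_M)$ with $\varphi_j = p_j^k \psi_k$ and $P$ is the change-of-basis matrix from $B$ to $E$ as in \eqref{eq:change_of_basis_matrix}, then evaluating $\varphi_j$ at the data points $x_1,\dots,x_L$ and comparing with \eqref{eq:edmd_matrices} shows that $G_E = P G_B$ (this is precisely the $\ell = M$, $\bar U^\top = P$ special case of the relation $G_{\tilde B} = \bar U^\top G_B$ derived just above the corollary, or equivalently follows directly from linearity of evaluation). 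Once this identity is in hand, the rest is a short computation.

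Next I would substitute. From $G_E = P G_B$ we get
\begin{equation*}
    G_E G_E^\top = P G_B G_B^\top P^\top .
\end{equation*}
Now $G_B G_B^\top = (G_B^\top)^\top G_B^\top$, and with the thin QR factorisation $G_B^\top = \hat Q \hat R$ — where $\hat Q \in \rmspace{L}{M}$ has orthonormal columns, $\hat Q^\top \hat Q = I_M$, and $\hat R \in \gl{M}$ is upper triangular (invertibility of $\hat R$ is exactly the full-row-rank hypothesis on $G_B$) — we obtain
\begin{equation*}
    G_B G_B^\top = (\hat Q \hat R)^\top (\hat Q \hat R) = \hat R^\top \hat Q^\top \hat Q \hat R = \hat R^\top \hat R .
\end{equation*}
Plugging $P = \hat R^{-\top} = (\hat R^\top)^{-1}$, so that $P^\top = \hat R^{-1}$, yields
\begin{equation*}
    G_E G_E^\top = \hat R^{-\top} (\hat R^\top \hat R) \hat R^{-1} = (\hat R^{-\top}\hat R^\top)(\hat R \hat R^{-1}) = I_M ,
\end{equation*}
which is the claim.

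The argument is essentially routine once the transformation law $G_E = P G_B$ is established, so the only point requiring a little care — the "main obstacle", such as it is — is making sure the bookkeeping of the change-of-basis convention is consistent: whether one gets $G_E = P G_B$ or $G_E = P^\top G_B$ or their inverses depends on the precise convention in \eqref{eq:change_of_basis_matrix} for what the columns of $P$ represent, and the transpose in the definition $P \coloneqq \hat R^{-\top}$ in \eqref{eq:qr_transformation} was presumably chosen precisely to make this come out right. I would therefore state the transformation law explicitly (citing the $\bar U^\top G_B$ computation already given), verify the convention once, and then the QR substitution closes the proof. One should also note in passing that $P$ as defined is indeed in $\gl{M}$, which is immediate since $\hat R$ is invertible, so Lemma~\ref{lemma:koopman_predicitions_basis} and the surrounding change-of-basis formalism legitimately apply.
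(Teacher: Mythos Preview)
Your proposal is correct and follows essentially the same approach as the paper: establish the transformation law $G_E = P G_B$, note that the full-row-rank hypothesis makes $\hat R$ invertible so $P \in \gl{M}$, and then substitute the QR factorisation. The only cosmetic difference is that the paper simplifies $G_E$ directly to $\hat Q^\top$ before squaring, whereas you expand $G_E G_E^\top = P G_B G_B^\top P^\top$ and then cancel; both routes are equivalent and equally short.
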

\begin{proof}
    We first ensure that $E$ is well-defined, \ie that $P \in \gl{M}$ exists. 
    Since $G_B$ has full row rank, its transpose $G_B^\top$ has full column rank. 
    This implies that $\hat{R}$ has full rank \cite[Theorem~5.2.3]{Golub2013}, so that $P$ does exist and has full rank.
    
    By construction of $E$, we have $G_E = P G_B$. 
    This gives
    \begin{equation*}
        G_E = P (\hat{R}^\top \hat{Q}^\top) = (P \hat{R}^\top) \hat{Q}^\top = (\hat{R}^{-\top} \hat{R}^\top) \hat{Q}^\top = \hat{Q}^\top
    \end{equation*}
    which implies $G_E G_E^\top = \hat{Q}^\top \hat{Q} = I_M$, since $\hat{Q}$ has orthonormal columns by construction.
\end{proof}

\subsection{Subspace Extended Dynamic Mode Decomposition} 

We now move on to the main idea from \secref{sec:invariant_subspace}, namely to choose a subspace $S \subset R$  and to consider the reduced-order subspace $W = T \oplus S \subset V$. 
We show how this idea can be incorporated into the general EDMD routine introduced above and explain why the reported change of basis from $B$ to $E$ is numerically convenient.

The idea with the two bases $B$ and $E$ is to first define $V$ and the coordinate subspace $T$ of $V$ by choosing a basis $B$. 
Without loss of generality, we may assume that $T$ is spanned by the first $s$ basis elements $(\psi_1, \dots, \psi_s)$. 
Then, we switch to the new basis $E$ for its numerical advantages.
To express $T$ in the new basis $E$, the following result is proven.
\begin{lemma} \label{lemma:equal_span}
    Let the assumptions from \cororef{coro:data_orthonormal_basis} hold.

    Then, for each $k \in \{1, \dots, M \}$, the span of $(\psi_1, \dots, \psi_k)$ and $(\varphi_1, \dots, \varphi_k)$ coincide.
\end{lemma}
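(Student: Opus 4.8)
The plan is to read off the claim from the triangular structure that the QR decomposition imprints on the change-of-basis matrix. Recall from \eqref{eq:change_of_basis_matrix} that $P$ is the change-of-basis matrix from $B$ to $E$, so that its $j$-th row collects the coefficients in the expansion $\varphi_j = p_j^k\psi_k$. The first step is to observe that $P = \hat R^{-\top}$ is lower triangular: in the thin QR decomposition $G_B^\top = \hat Q\hat R$ the factor $\hat R$ is upper triangular and invertible (its invertibility is already recorded in the proof of \cororef{coro:data_orthonormal_basis}), hence $\hat R^\top$ and its inverse $P$ are lower triangular. Consequently $p_j^k = 0$ whenever $k > j$.

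The second step is the inclusion ``$\subseteq$'': for any $j \le k$, since $p_j^i = 0$ for $i > j$, the expansion $\varphi_j = p_j^i\psi_i$ involves only $\psi_1,\dots,\psi_j$, hence $\varphi_j \in \operatorname{span}(\psi_1,\dots,\psi_k)$; therefore $\operatorname{span}(\varphi_1,\dots,\varphi_k) \subseteq \operatorname{span}(\psi_1,\dots,\psi_k)$.

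The third step promotes this to equality by a dimension count. Since $(\psi_1,\dots,\psi_M)$ is a basis of $V$, the right-hand side is $k$-dimensional. For the left-hand side, the leading $k\times k$ principal block of $P$ is lower triangular with the first $k$ diagonal entries of $P$ on its diagonal; these are nonzero because $P$ is invertible and triangular, so the block is invertible, which shows that $\varphi_1,\dots,\varphi_k$ are linearly independent (again using that $(\psi_i)$ is a basis of $V$). Hence both spans are $k$-dimensional, and the inclusion forces equality, for every $k\in\{1,\dots,M\}$ (with $k=M$ just restating that $B$ and $E$ both span $V$).

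I do not anticipate a genuine obstacle here; the only care needed is in the bookkeeping of transposes and of the row/column roles in $P$ — i.e.\ confirming that ``$\hat R$ upper triangular'' translates into ``$\varphi_j$ depends only on $\psi_1,\dots,\psi_j$'' rather than the reverse — together with recalling from the proof of \cororef{coro:data_orthonormal_basis} that $\hat R$, and hence $P$, is genuinely invertible, so that the diagonal argument in the last step applies.
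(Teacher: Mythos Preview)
Your proof is correct and follows essentially the same approach as the paper: triangularity of the change-of-basis matrix forces one inclusion, and a dimension count upgrades it to equality. The only cosmetic difference is that the paper works with $P^{-1}=\hat R^{\top}$ (lower triangular) to show $\operatorname{span}(\psi_1,\dots,\psi_k)\subseteq\operatorname{span}(\varphi_1,\dots,\varphi_k)$, whereas you work with $P=\hat R^{-\top}$ to get the reverse inclusion; either direction suffices, and your extra justification of $\dim\operatorname{span}(\varphi_1,\dots,\varphi_k)=k$ via the leading principal block is fine but unnecessary since $E$ is already a basis.
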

\begin{proof}
    Fix any $k \in \{ 1, \dots, m \}$. 
    Denote by $T$ and $\hat{T}$ the span of $ (\psi_1, \dots, \psi_k)$ and $(\varphi_1, \dots, \varphi_k)$, respectively. 
    Recall from the proof of \cororef{coro:data_orthonormal_basis} that $P \in \gl{M}$ holds.
    
    We prove the statement by showing $T \subseteq \hat{T}$. 
    Then, equality follows from $\dim T = \dim \hat{T} = k$, which follows from the linear independence of both $(\psi_1, \dots, \psi_k)$ and $(\varphi_1, \dots, \varphi_k)$.
    
    To establish the desired inclusion, we consider the inverse change-of-basis matrix $P^{-1} = \hat{R}^\top$ from $E$ to $B$, which has the lower triangular form
    \begin{equation*}
        P^{-1} = \Matrix{
        q_1^1 \\
        q_2^1 & q_2^2 \\
        \vdots & \vdots & \ddots \\
        q_m^1 & q_m^2 & \dots & q_m^m
        } .
    \end{equation*}
    This is because $\hat{R}$ in \eqref{eq:qr_transformation} is upper triangular by construction, which implies that $\hat{R}^\top$ is lower triangular. Hence, for each $j \in \{ 1, \dots, k \}$ we have
    \begin{equation*}
        \psi_j = \sum_{i=1}^j q_j^i \varphi_i 
    \end{equation*}
    Now, let $\psi = \sum_{j=1}^k a^j \psi_j \in T$ 
    be arbitrary. 
    The above observation implies
    \begin{equation}
        \psi = \sum_{j=1}^k a^j \left( \sum_{i=1}^j q_j^i \varphi_i \right) = \sum_{j=1}^k b^j \varphi_j \in \hat{T},
    \end{equation}
    where for $j \in \{1, \dots, k \}$ we have $b^j = \sum_{i=j}^k q_i^j a^i$. 
    This gives $T \subseteq \hat{T}$, and the assertion follows.
\end{proof}
Therefore, also in the new basis $E$, the coordinate subspace $T \subseteq V$ is spanned by the first $s$ basis elements $(\varphi_1, \dots, \varphi_s)$. 
Introducing the subspace $R$ of $V$ as the one spanned by $(\varphi_{s+1}, \dots, \varphi_M) \eqqcolon (\alpha_1, \dots, \alpha_d)$ allows us to write $V$ as the direct sum of $T$ and $R$, \cf \figref{fig:sketch_subspaces}.

Further, following \probref{prob:invariant_subspace_predicition}, we now consider a \emph{fixed} subspace $S \subset R$ with $0 < \dim S = r < d$. 
In \secref{sec:optimal_subspace}, $S$ will be a decision variable that we optimise over.

Let $(\beta_1, \dots, \beta_r)$ be a basis for $S$. 
For computations, we represent the subspace $S$ with a matrix, which depends on the chosen basis. 
Since $S$ is a subspace of $R$, we can express each of its basis vectors as linear combinations of the basis vectors for $R$.
Assume, for $j \in \{1, \dots, r\}$, that they are given by
\begin{equation*}
	\beta_j = u_j^k \alpha_{k} ,
\end{equation*}
with $u_j^k \in \mathbb{R}$. 
Let us collect the components of $(\beta_1,\dots,\beta_r)$ relative to $(\alpha_{1}, \dots, \alpha_d)$ in the matrix
\begin{equation*}
    U = \Matrix{
        u_1^1 & \dots & u_r^1 \\
		\vdots & \ddots & \vdots \\
		u_1^d & \dots & u_r^d
    } \in \rmspace{d}{r} .
\end{equation*}
Without loss of generality, we may assume that $U$ has orthonormal columns, \ie $U^\top U = I_r$. 
We refer to \appref{app:ONB} for a proof.

Finally, we have all the results together to efficiently represent the EDMD compression on $W$. 
\begin{theorem} \label{theo:edmd_compression_reduced}
    Let the assumptions of \cororef{coro:data_orthonormal_basis} hold.
    For a fixed $s \in \{1, \dots, M-1\}$, denote by $T$ the subspace spanned by $(\varphi_1, \dots, \varphi_s)$, and by $R$ the subspace spanned by $(\varphi_{s+1}, \dots, \varphi_M)$. 
    
    Let $S$ be a subspace of $R$ of dimension $r \in \{1, \dots, M-s-1\}$, and suppose that $(\beta_1, \dots, \beta_r)$ is a basis for $S$ such that the matrix $U$ of components relative to $(\varphi_{s+1}, \dots, \varphi_M)$ has orthonormal columns. 
    Consider the subspace $W = T \oplus S$ of $V$, and the basis $\tilde{E} = (\varphi_1, \dots, \varphi_s, \beta_1, \dots, \beta_r)$ for $W$.
    
    Then, the matrix representation $K_{\tilde{E}}$ of the EDMD compression on $W$ relative to $\tilde{E}$ is given by
    \begin{equation*}
        K_{\tilde{E}} 
        = \bar{U}^\top G_E S_E^\top \bar{U} ,
    \end{equation*}
    with $\bar{U} = \operatorname{blkdiag}(I_s, U)$.
\end{theorem}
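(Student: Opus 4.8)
The plan is to recognise $\tilde E$ as a basis of exactly the type already handled in the general subspace-EDMD construction of this section, and then to collapse the resulting expression using the two normalisations built into the setup, namely $G_E G_E^\top = I_M$ from \cororef{coro:data_orthonormal_basis} and $U^\top U = I_r$.

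First I would write every element of $\tilde E = (\varphi_1, \dots, \varphi_s, \beta_1, \dots, \beta_r)$ as a linear combination of the basis $E = (\varphi_1, \dots, \varphi_M)$ of $V$. The first $s$ elements are themselves members of $E$, so they contribute the standard unit vectors $e_1, \dots, e_s \in \mathbb{R}^M$; and since $\beta_j = u_j^k \varphi_{s+k} = u_j^k \alpha_k$, the element $\beta_j$ contributes the vector whose last $d = M - s$ entries form the $j$-th column of $U$ and whose first $s$ entries vanish. Collecting these columns, the coefficient matrix of $\tilde E$ relative to $E$ is exactly $\bar U = \operatorname{blkdiag}(I_s, U)$, of size $M \times (s+r)$, which has full column rank because both $I_s$ and $U$ do.

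Next I would apply the formula derived earlier for the matrix representation of the EDMD compression on a subspace $W \subset V$ whose basis has coefficient matrix $\bar U$ with respect to a basis of $V$ — only now with the basis of $V$ taken to be $E$ rather than $B$. This substitution is legitimate: since $G_E = P G_B$ with $P \in \gl{M}$, the matrix $G_E$ inherits full row rank from $G_B$, so all the inverses appearing in that derivation exist. It yields
\begin{equation*}
    K_{\tilde E} = (\bar U^\top G_E G_E^\top \bar U)^{-1}\, \bar U^\top G_E S_E^\top \bar U .
\end{equation*}
It then remains only to simplify the Gram factor: by \cororef{coro:data_orthonormal_basis} we have $G_E G_E^\top = I_M$, so $\bar U^\top G_E G_E^\top \bar U = \bar U^\top \bar U = \operatorname{blkdiag}(I_s, U^\top U) = \operatorname{blkdiag}(I_s, I_r) = I_{s+r}$, where the last equality uses $U^\top U = I_r$. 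Substituting this back, the leading inverse disappears and we obtain $K_{\tilde E} = \bar U^\top G_E S_E^\top \bar U$, as claimed.

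I do not expect a genuine obstacle here. The only points that require care are the bookkeeping that identifies $\bar U$ with the block-diagonal matrix $\operatorname{blkdiag}(I_s, U)$ — arising from the trivial embedding of $T$ in $V$ together with the $U$-parametrisation of $S \subset R$ — and checking that the hypotheses guaranteeing invertibility ($G_B$, hence $G_E$, of full row rank) and the conclusion $G_E G_E^\top = I_M$ of \cororef{coro:data_orthonormal_basis} are in force. Once those are available, the statement is a two-line consequence of the two orthonormality relations, and it is precisely this cancellation of the Gram matrix that makes the new basis $E$ numerically convenient for the optimisation in \secref{sec:optimal_subspace}.
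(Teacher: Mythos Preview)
Your proposal is correct and follows essentially the same route as the paper: identify $\bar U = \operatorname{blkdiag}(I_s, U)$ as the coefficient matrix of $\tilde E$ relative to $E$, invoke the general subspace-EDMD formula (the paper cites \cororef{coro:koopman_compression_sub} for this), and then collapse the Gram factor using $G_E G_E^\top = I_M$ and $U^\top U = I_r$. The only minor addition in the paper's version is a one-line check that $T \cap S = \{0\}$ so that $\tilde E$ is genuinely a basis, which you cover implicitly via the full column rank of $\bar U$.
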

\begin{proof}
    First, note that by \lemmaref{lemma:edmd_stiefel_basis} a basis $(\beta_1, \dots, \beta_r)$ of the claimed form always exists. 
    Further, the sum of $T$ and $S$ is surely direct, since $T \cap R = \{ 0 \}$ and $S \subset R$ imply $T \cap S = \{0 \}$. 
    This also implies that $\tilde{E}$ is indeed a basis for $W$.
    
    From \cororef{coro:data_orthonormal_basis} we have $G_E G_E^\top = I_M$, so \theoref{theo:koopman_compression} implies that the matrix representation of the EDMD compression on $V$ relative to $E$ is $K_E = G_E S_E^\top$. 
    Note that $\bar{U}$ collects the components of $\tilde{E}$ relative to $E$.
    Thus, \cororef{coro:koopman_compression_sub} implies $K_{\tilde{E}} = (\bar{U}^\top \bar{U})^{-1} \bar{U}^\top G_E S_E^\top \bar{U}$, and the assertion follows from observing
    \begin{equation*}
        \bar{U}^\top \bar{U} = \Matrix{I_s \\ & U^\top} \Matrix{I_s \\ & U} = \Matrix{I_s \\ & U^\top U} = \Matrix{I_s \\ & I_r} = I_{s+r} .
    \end{equation*}
\end{proof}
Therefore, we insist on orthonormal columns for the matrix $U$  -- which represents the subspace $S \subset R$ -- in order to maintain the appealing orthonormality properties introduced to the EDMD algorithm through the choice of the \enquote{QR basis} $E$. 
Basically, we get away with inverting $G_B G_B^\top$ only once using the QR decomposition, and then exploiting this result by restricting our algorithm to work with an orthonormal basis for $S$, in the sense of \lemmaref{lemma:edmd_stiefel_basis}. 
We present the geometric structure for that in \secref{sec:grassmann_optimisation}.

The last step for the state prediction framework in \secref{sec:state_predicitions_koopman} is the coordinate matrix $\Pi_{\tilde{E}}$ relative to $\tilde{E}$. 
For this, we select $B$ as a coordinate basis, and demand that the subspace $T$ contains all coordinate functions.
\begin{corollary} \label{coro:qr_state_prediction}
    Let the assumptions of \theoref{theo:edmd_compression_reduced} hold with $s \geq n$. 
    In addition, assume that $B$ is a coordinate basis for $V$.

    Then, the coordinate matrix $\Pi_{\tilde{E}}$ relative to $\tilde{E}$ is given by
    \begin{align*}
        \Pi_{\tilde{E}} &= \Matrix{Q_{11} & 0} , &
        &\text{with} &
        Q_{11} &= \Matrix{I_n & 0} P^{-1} \Matrix{I_n \\ 0} . 
    \end{align*}
\end{corollary}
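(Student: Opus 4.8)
The plan is to track the coordinate functions $\theta_1,\dots,\theta_n$ through the two successive changes of basis $B \to E \to \tilde E$, and to use the triangular structure of the QR transformation to pin down where each $\theta_k$ lands. First I would invoke that $B$ is a coordinate basis, \ie $\theta_k = \psi_k$ for every $k \in \{1,\dots,n\}$. With the row convention of \eqref{eq:change_of_basis_matrix}, the change-of-basis identity reads $\psi_k = \sum_{j} (P^{-1})_{kj}\,\varphi_j$, and the crucial structural fact -- already used in the proof of \lemmaref{lemma:equal_span} -- is that $P^{-1} = \hat R^{\top}$ is lower triangular (by \eqref{eq:qr_transformation}). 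Hence $\theta_k = \psi_k = \sum_{j=1}^{k} (P^{-1})_{kj}\,\varphi_j$, a combination of only the first $k$ basis vectors of $E$.

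Next I would verify that $W$ is a coordinate subspace, so that $\Pi_{\tilde E}$ is well-defined in the first place. Since $s \ge n$, the expansion above places $\theta_k \in \operatorname{span}(\varphi_1,\dots,\varphi_k) \subseteq \operatorname{span}(\varphi_1,\dots,\varphi_s) = T \subseteq W$ for all $k \le n$; alternatively this is immediate from \lemmaref{lemma:equal_span} applied with $k = n$, combined with $\psi_j = \theta_j$ for $j \le n$. Thus $W$ contains all coordinate functions.

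Then I would read off the coordinate matrix relative to $\tilde E = (\varphi_1,\dots,\varphi_s,\beta_1,\dots,\beta_r)$. Because each $\theta_k$ lies in $T$, which is spanned by the first $s$ entries of $\tilde E$, all of its coefficients against $\beta_1,\dots,\beta_r$ vanish, while its coefficients against $\varphi_1,\dots,\varphi_s$ are $(P^{-1})_{k1},\dots,(P^{-1})_{ks}$ -- and those carrying an index exceeding $k$, hence in particular all those with index in $\{n+1,\dots,s\}$, vanish by lower triangularity. Stacking the rows $k = 1,\dots,n$ then gives $\Pi_{\tilde E} = \Matrix{Q_{11} & 0}$, where $Q_{11}$ is the leading $n\times n$ block of $P^{-1}$, \ie $Q_{11} = \Matrix{I_n & 0}\, P^{-1}\, \Matrix{I_n \\ 0}$.

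The argument is essentially bookkeeping once the triangularity of $P^{-1}$ is in hand; the one spot calling for care is the interplay of conventions -- the row-indexed change-of-basis matrix of \eqref{eq:change_of_basis_matrix}, the transpose in the definition $P = \hat R^{-\top}$, and the ordering of $\tilde E$ with the $T$-block first -- together with the observation that the hypothesis $s \ge n$ is precisely what forces the $S$-component of every $\theta_k$ to be empty, so that $W$ remains a coordinate subspace and no $\beta$-contributions survive in $\Pi_{\tilde E}$.
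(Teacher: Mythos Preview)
Your argument is correct and follows essentially the same route as the paper: both exploit that $P^{-1}=\hat R^{\top}$ is lower triangular (as in the proof of \lemmaref{lemma:equal_span}) to conclude $\theta_j=\psi_j=\sum_{k=1}^{j}q_j^k\varphi_k$, and then read off $\Pi_{\tilde E}=\Matrix{Q_{11}&0}$. Your write-up is slightly more explicit than the paper's in that you check that $W$ is a coordinate subspace and spell out why the $\beta$-coefficients vanish, but the core idea is identical.
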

\begin{proof}
    We have to express the coordinate functions as linear combinations of the basis elements in $\tilde{E}$. 
    Since $B$ is a coordinate basis, the coordinate functions are given by the first $n$ basis elements in $B$, \ie we have $(\psi_1, \dots, \psi_n) = (\theta_1, \dots, \theta_n)$.
    
    As established in the proof of \lemmaref{lemma:equal_span}, $P^{-1}$ has lower triangular form
    \begin{align*}
        P^{-1} &= \Matrix{
            q_1^1 \\
            q_2^1 & q_2^2 \\
            \vdots & \vdots & \ddots \\
            q_m^1 & q_m^2 & \dots & q_m^m
        } &
        &\implies &
        Q_{11} &= \Matrix{
            q_1^1 \\
            q_2^1 & q_2^2 \\
            \vdots & \vdots & \ddots \\
            q_n^1 & q_n^2 & \dots & q_n^n
        } .
    \end{align*}
    Therefore, $Q_{11}$ relates the basis vectors $(\psi_1, \dots, \psi_n)$ with the basis vectors $(\varphi_1, \dots, \varphi_n)$ via
    \begin{equation*}
        \forall j \in \{1, \dots, n \} \colon \theta_j = \psi_j = \sum_{k=1}^j q_j^k \varphi_k ,
    \end{equation*}
    and the assertion follows.
\end{proof}
\begin{remark}
    Note that the basis $\tilde{E}$ has the advantage that the coordinate matrix $\Pi_{\tilde{E}}$ does not depend on the chosen subspace $S$. 
    This is convenient when setting up an optimisation algorithm that seeks to find an optimal $S$.
    The calculation of $Q_{11}$ can be done easily once the QR decomposition $G_B^\top = \hat{Q} \hat{R}$ is available.
    By definition, we have $P^{-1} = \hat{R}^\top$, and thus $Q_{11} = \hat{R}_{11}^\top$, where $\hat{R}_{11}$ is the upper left block of $\hat{R}$. 
\end{remark}

\section{Finding an Optimal Subspace} \label{sec:optimal_subspace}

In this section, we show that the Grassmann manifold is the natural setting for subspace optimisation, and we introduce the closely-related Stiefel manifold to provide a convenient numerical and algebraic toolkit for our analysis.

Each orthonormal matrix $U \in \rmspace{d}{r}$ identifies an $r$-dimensional subspace of $\mathbb{R}^d$ by the span of its columns.
However, the choice of $U$ for a given subspace is not unique. 
For instance, possible choices to span the two-dimensional subspace $S = \{ (x,y,z) \in \mathbb{R}^3 \mid z = 0 \}$ of $\mathbb{R}^3$ include
\begin{align*}
    U_1 &= \Matrix{1 & 0 \\ 0 & 1 \\ 0 & 0}, &
    U_2 &= \frac{1}{\sqrt{2}} \Matrix{-1 & -1 \\ -1 & 1 \\ 0 & 0}, &
    U_3 &= \frac{1}{\sqrt{2}} \Matrix{1 & 1 \\ -1 & 1 \\ 0 & 0} .
\end{align*}
This is illustrated in \figref{fig:non_unique_basis}. 
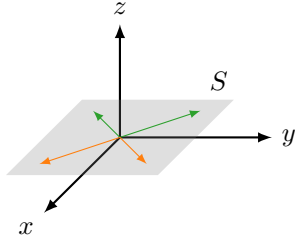
\begin{figure}
   \centering
   \begin{tikzpicture}[>=latex,y={(1cm,0)},x={(-0.5cm,-0.5cm)},z={(0,1cm)}]
        % Axes
        \draw[->,thick] (0,0,0) -- (2,0,0) node[below left] {$x$};
        \draw[->,thick] (0,0,0) -- (0,2,0) node[right] {$y$};
        \draw[->,thick] (0,0,0) -- (0,0,1.5) node[above] {$z$};

        % xy-plane
        \fill[black!50, nearly transparent] (1,1,0) -- (-1,1,0) -- node[pos=0.1, black, above, opaque] {$S$} (-1,-1,0) -- (1,-1,0) -- cycle;

        % second basis
        \draw[->, mygreen] (0,0,0) -- (-0.7071,-0.7071,0);
        \draw[->, mygreen] (0,0,0) -- (-0.7071,0.7071,0);

        % third basis
        \draw[->, myorange] (0,0,0) -- (0.7071,-0.7071,0);
        \draw[->, myorange] (0,0,0) -- (0.7071,0.7071,0);
   \end{tikzpicture}
   \caption{Subspace $S$, together with the basis given by $U_2$ (green) and $U_3$ (orange).}
   \label{fig:non_unique_basis}
\end{figure}

Formally, the \emph{Stiefel manifold} is defined as the set of all $d \times r$ orthonormal matrices,
\begin{equation*}
	\stiefel{r}{d} = \{ U \in \rmspace{d}{r} \mid U^\top U = I_r \} .
\end{equation*}
Two elements $U_1, U_2 \in \stiefel{r}{d}$ define the same subspace $S$ if and only if there exists an orthogonal matrix $R \in \mathrm{O}(r)$ such that $U_1 = U_2 R$.
It follows that each subspace generated by $U \in \stiefel{r}{d}$ can be uniquely identified with the \emph{equivalence class}
\begin{equation*}
	\eqclass{U} \coloneqq \{ U R \in \stiefel{r}{d} \mid R \in \og{r} \}.
\end{equation*}
Finally, the \emph{Grassmann manifold} $\grass{r}{d}$, see, e.g., \cite{Edelman1998, Bendokat2024, Absil2008, Boumal2023}, is defined as the set of $r$-dimensional subspaces of $\mathbb{R}^d$, and can thus be represented as a \emph{quotient manifold},
\begin{equation*}
	\grass{r}{d} \coloneqq \mathcal{ST}(r,d) / \mathrm{O}(r) = \{ \eqclass{U} \mid U \in \mathcal{ST}(r,d) \} .
\end{equation*}
A \emph{representative} of the equivalence class $[U] \in \grass{r}{d}$ is an element of $[U]$ which may always be written as $UR$ for some $R \in \og{r}$.
To show that a function $\bar{g}$ defined on $\stiefel{r}{d}$ is well-defined on $\grass{r}{d}$, it thus suffices to show that $\bar{g}(U) = \bar{g}(UR)$ for all $U \in \stiefel{r}{d}$ and $R \in \og{r}$.

Recall from \theoref{theo:edmd_compression_reduced} that the matrix representation $K_{\tilde{E}}$ of $C_W \in \End{W}$ by the extended dynamic mode decomposition depends on the chosen basis for $S$ through the matrix $U \in \stiefel{r}{d}$. 
To capture this, we introduce the map
\begin{equation*}
    \hat{K} \colon \begin{cases*}
        \stiefel{r}{d} \to \rmspace{\ell}{\ell} \\
        U \mapsto \hat{K}(U)
    \end{cases*}
\end{equation*}
with
\begin{align*}
    \hat{K}(U) &\coloneqq \bar{U}^\top G_E S_E^\top \bar{U} , & 
    &\text{and} &
    \bar{U} &= \operatorname{blkdiag}(I_s, U) .
\end{align*}

In the following, we rely on $\hat{K}$ to introduce a surrogate for our true objective $\mu_N$ in \eqref{eq:invariance_measure}, where we assume that the prediction horizon $N \in \mathbb{N}$ is fixed.
We do so by introducing an objective function $$g_N \colon \grass{r}{d} \to \mathbb{R}_{\geq 0} $$ on the Grassmann manifold.
The objective should be designed such that a small value for $g_N$ implies a small value for $\mu_N$. 
Our idea is then to solve the problem
\begin{mini}[]
    {\eqclass{U} \in \grass{r}{d}}{g_N(\eqclass{U}) }
    {\label{eq:subspace_edmd_optimisation}}{}
\end{mini}
over the Grassmann manifold.
The objective $g_N$ will introduce a relaxation compared to the true objective $\mu_N$ in \eqref{eq:invariance_measure}. 
Namely, we restrict ourselves to a finite \emph{test data set} $\check{X} \subset X$ of initial states, instead of considering all possible initial states in $X$.
The test data is
\begin{equation*}
    \check{X} = \{ \check{x}_i \}_{i=1}^J \subset X ,
\end{equation*}
drawn i.i.d and collected on top of the training data $D$. 
Moreover, we assume that for each $i \in \{1, \dots, J\}$ and $k \in \{1, \dots, N\}$, the state $x(k,\check{x}_i)$ is available, \ie for each initial state $\check{x}_i \in \check{X}$, we measure the resulting trajectory for the next $N$ timesteps.

Let $U \in \stiefel{r}{d}$ represent a basis for a subspace $S$ of $R$. 
We then introduce the $N$-step prediction error $\bar{g}_N \colon \stiefel{r}{d} \to \mathbb{R}_{\geq 0}$ for $W = T \oplus S$ in state space over the test data as
\begin{equation} \label{eq:residual_state}
    \bar{g}_N(U) \coloneqq \frac{1}{2 J N} \sum_{i=1}^J \sum_{k=1}^N \| x(k, \check{x}_i) - \hat{x}(k, \check{x}_i; S) \|_2^2 .
\end{equation}

We can state the following result.
\begin{proposition}
    Consider $\bar{g}_N \colon \stiefel{r}{d} \to \mathbb{R}_{\geq 0}$ as defined in \eqref{eq:residual_state}. 
    Then, for all $U \in \stiefel{r}{d}$ and $R \in \og{r}$, the equality $\bar{g}_{N}(U) = \bar{g}_{N}(U R)$ holds.
\end{proposition}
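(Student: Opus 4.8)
The plan is to exploit that $\bar g_N(U)$ depends on $U$ \emph{only} through the state prediction $\hat x(\cdot,\check x_i;S)$, while the true trajectory $x(\cdot,\check x_i)$ appearing in \eqref{eq:residual_state} is entirely independent of $U$. Hence it suffices to show that the subspace $S$, and therefore the state prediction, is unchanged when $U$ is replaced by $UR$ for $R\in\og{r}$; the equality of every summand in \eqref{eq:residual_state}, and thus $\bar g_N(U)=\bar g_N(UR)$, then follows immediately.

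First I would record the standard fact that $U$ and $UR$ span the same $r$-dimensional subspace $S\subset R$, since $R$ is invertible. Consequently both matrices determine the \emph{same} reduced subspace $W=T\oplus S$; what changes is merely the basis of $W$. If $(\beta_1,\dots,\beta_r)$ is the basis of $S$ encoded by $U$, i.e.\ $\beta_j=u_j^k\alpha_k$, then $UR$ encodes the basis $(\beta_1',\dots,\beta_r')$ with $\beta_j'=\sum_l R_j^l\beta_l$, an orthogonal change of basis \emph{within} $S$. Padding with the identity on $T$, this extends to the invertible change-of-basis matrix $\bar R=\operatorname{blkdiag}(I_s,R)$ relating the bases $\tilde E=(\varphi_1,\dots,\varphi_s,\beta_1,\dots,\beta_r)$ and $\tilde E'=(\varphi_1,\dots,\varphi_s,\beta_1',\dots,\beta_r')$, both of which are bases for $W$.

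The decisive step is then to invoke \cororef{coro:state_prediction}: since $W=T\oplus S$ contains $T$, and $T$ contains all coordinate functions, $W$ is a finite-dimensional coordinate subspace, so the corollary guarantees that the state prediction obtained from the Koopman linear system relative to $\tilde E$ agrees, for every initial state $\check x_i$ and every step $k$, with the one obtained relative to $\tilde E'$. In other words, $\hat x(\cdot,\check x_i;S)$ is genuinely a function of the subspace $S$ alone and does not depend on the chosen representative $U$ of $\eqclass{U}\in\grass{r}{d}$. Substituting this into \eqref{eq:residual_state} gives $\bar g_N(U)=\bar g_N(UR)$.

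I expect the real work here to be bookkeeping rather than analysis. If one prefers a direct argument not citing \cororef{coro:state_prediction}, one verifies that $\hat K(UR)=\bar R^\top\hat K(U)\bar R$ with $\bar R=\operatorname{blkdiag}(I_s,R)\in\og{\ell}$, applies \lemmaref{lemma:koopman_predicitions_basis} to conclude that the lifted predictions transform by $\bar R$, and finally uses that the coordinate matrix of \cororef{coro:qr_state_prediction} satisfies $\Pi_{\tilde E}\bar R=\Pi_{\tilde E}$ (its last $r$ columns vanish and $s\ge n$), so that the state prediction is unchanged; the only obstacle there is carefully tracking how the right multiplication by $R$ propagates through $\bar U$, the compression $\hat K$, the lifted trajectory $z_{\tilde E}$, and the coordinate projection. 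Either way no genuine estimate is needed --- the statement is, at its core, the assertion that $\bar g_N$ factors through the quotient map $\stiefel{r}{d}\to\grass{r}{d}$, which is exactly what makes the objective $g_N$ in \eqref{eq:subspace_edmd_optimisation} well defined.
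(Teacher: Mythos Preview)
Your proposal is correct and takes essentially the same approach as the paper: the paper's proof is the single sentence ``This follows directly from \cororef{coro:state_prediction},'' and your argument is precisely the unpacking of that citation, observing that $U$ and $UR$ span the same subspace $S$, hence the same coordinate subspace $W=T\oplus S$, and then invoking the basis-invariance of the state prediction. Your alternative direct route via $\hat K(UR)=\bar R^\top\hat K(U)\bar R$ is also valid bookkeeping but is not needed, as the paper's one-line appeal to the corollary already settles it.
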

\begin{proof}
    This follows directly from \cororef{coro:state_prediction}. 
\end{proof}

We can thus lift the objective to the Grassmannian via
\begin{equation*}
    g_{N} \colon \begin{cases*}
        \grass{r}{d} \to \mathbb{R}_{\geq 0} \\
        \eqclass{U} \mapsto g_{N}(\eqclass{U}) \coloneqq \bar{g}_{N}(U)
    \end{cases*}
\end{equation*}
\begin{remark}
    The chosen objective function is only one of many possible options. Other objectives, tailored to different needs and requirements, can easily be integrated into our framework.
\end{remark}

Let us summarise the procedure derived in the last sections, based on the framework depicted in \figref{fig:sketch_subspaces}.
\begin{enumerate}
    \item 
        Select a coordinate basis $B = (\psi_1, \dots, \psi_M)$ for $V$ such that $T$ is spanned by $(\psi_1, \dots, \psi_s)$, with $s \geq n$. 
    \item 
        Collect the data set $D$ and form the data matrices $G_B$ and $S_B$. 
        Compute $G_B^\top = \hat{Q} \hat{R}$.
    \item
        Introduce the new basis $E = (\varphi_1, \dots, \varphi_M)$ by using the change-of-basis matrix $P = \hat{R}^{-\top}$. 
        Build the transformed data matrices $G_E = P G_B = \hat{Q}^\top$ and $S_E = P S_B$.
    \item 
        Select a reduced dimension $r$ and solve the optimisation problem \eqref{eq:subspace_edmd_optimisation} to obtain $\eqclass{U}$.
    \item
        Consider the resulting subspace $W = T \oplus S \subset V$ with basis $\tilde{E}$, where the lift and the Koopman compression relative to $\tilde{E}$ are
        \begin{align*}
            \tilde{\Phi} &\colon x \mapsto \Matrix{I_s \\ & U^\top} P \Psi(x), &
            &\textnormal{and} &
            K_{\tilde{E}} &= \Matrix{I_s \\ & U^\top} G_E S_E^\top \Matrix{I_s \\ & U} .
        \end{align*}
    \item 
        The state predictions on $W$ are given by $\hat{x}(\cdot,x_0) = \Matrix{Q_{11} & 0} z_{\tilde{E}}(\cdot, x_0)$, where $Q_{11}^\top$ is the upper-left $(n \times n)$-block of $\hat{R}$.
\end{enumerate}

\section{Optimisation over the Grassmann Manifold}
\label{sec:grassmann_optimisation}

For the solution of the optimisation problem \eqref{eq:subspace_edmd_optimisation}, we will apply a trust-region algorithm for Riemannian manifolds as described in \cite{Boumal2023,Absil2008}.
This algorithm requires a Riemannian metric on the Grassman manifold $\grass{r}{d}$, which we determine using its relationship with the Stiefel manifold $\stiefel{r}{d}$.
The equivalence class $[U] \in \grass{r}{d}$ is then optimised by using a representative $U \in \stiefel{r}{d}$, which is only updated in directions parallel\footnote{in the sense of the Riemannian metric} to the tangent space of $[U]$.

\subsection{Riemannian structures of $\stiefel{r}{d}$ and $\grass{r}{d}$}

The Stiefel manifold $\stiefel{r}{d}$ is an embedded submanifold of $\rmspace{d}{r}$, and can be understood as the zero-set of the smooth %submersion 
map $\Phi : \rmspace{d}{r} \to \rmspace{r}{r}$ defined by $\Phi(U) := U^\top U - I_r$.
It follows that the tangent space to $\stiefel{r}{d}$ at $U$ can be identified by differentiating this condition, which gives
\begin{align*}
    \tspace{U}{\stiefel{r}{d}} \cong \ker \mathrm{D} \Phi(U) = \{
        V \in \rmspace{d}{r} \mid U^\top V + V^\top U = 0
    \} . %\subset \rmspace{d}{r} .
\end{align*}
The embedding in $\rmspace{d}{r}$ then allows to introduce for each $U \in \stiefel{r}{d}$ the inner product %defines the Riemannian metric
\begin{equation*}
    \rmetric{\cdot}{\cdot}{U} \colon \begin{cases*}
        \tspace{U}{\stiefel{r}{d}} \times \tspace{U}{\stiefel{r}{d}} \to \mathbb{R} \\
        (V_1, V_2) \mapsto \rmetric{V_1}{V_2}{U} \coloneqq \operatorname{tr}(V_1^\top V_2)
    \end{cases*}
\end{equation*}
on the tangent space to $\stiefel{r}{d}$ at $U$, which defines a Riemannian metric. 

Recall the definition of the Grassmann manifold as the quotient manifold $\grass{r}{d} = \stiefel{r}{d} / \og{r}$,
formally defined by the map
\begin{equation*}
    \pi \colon \begin{cases*}
        \stiefel{r}{d} \to \grass{r}{d} \\
        U \mapsto \pi(U) \coloneqq \eqclass{U}
    \end{cases*}
\end{equation*}
The derivative of $\pi$ can now be used to split the tangent space at each point into a \emph{vertical space} and \emph{horizontal space}.
The vertical space is defined as the kernel of $\pi$, i.e., 
\[ 
    \ver{U}{\stiefel{r}{d}} = \ker \mathrm{D} \pi(U), 
\]
and consists exactly of the tangent vectors that change the matrix $U$ without changing the equivalence class $[U]$.
The horizontal space is defined as the orthogonal complement to the vertical space $\hor{U}{\stiefel{r}{d}} = (\ver{U}{\stiefel{r}{d}})^\perp$ with respect to the inner product on $\tspace{U}{\stiefel{r}{d}}$ given by the Riemannian metric.
In the particular case of the Stiefel manifold $\stiefel{r}{d}$, the horizontal space at $U$ can be identified with the matrix subspace
\begin{align*}
    \hor{U}{\stiefel{r}{d}} = \{ V \in \rmspace{d}{r} \mid U^\top V = 0 \}.
\end{align*}
The orthogonal projection 
$\Pi_U^\mathrm{hor} \colon \rmspace{d}{r} \to \hor{U}{\stiefel{r}{d}}$ is given by
$
    W \mapsto \Pi_U^\mathrm{hor} W = (I_d - U U^\top) W 
$.

The restriction $\left.\mathrm{D} \pi (U) \right|_{\hor{U}{\stiefel{r}{d}}}$ provides an isomorphism between the horizontal space $\hor{U}{\stiefel{r}{d}}$ and the tangent space $\tspace{\eqclass{U}}{\grass{r}{d}}$.
This identification induces a Riemannian metric on $\grass{r}{d}$ by
\begin{equation*}
    \rmetric{\cdot}{\cdot}{\eqclass{U}} \colon 
    \begin{cases*}
        \tspace{\eqclass{U}}{\grass{r}{d}} \times \tspace{\eqclass{U}}{\grass{r}{d}} \to \mathbb{R} \\
        (\eta_1, \eta_2) \mapsto \rmetric{\eta_1}{\eta_2}{\eqclass{U}} \coloneqq \rmetric{V_1}{V_2}{U}
    \end{cases*}
\end{equation*}
with $V_i = \left.\mathrm{D} \pi (U) \right|_{\hor{U}{\stiefel{r}{d}}}^{-1} \eta_i \in \hor{U}{\stiefel{r}{d}}$ for any $\eqclass{U} \in \grass{r}{d}$, where $U \in [U]$ is an arbitrary representative, the choice of which does not matter.

The consequence of this structure is that applying the trust-region method on the Grassmann manifold is equivalent to applying it on the Stiefel manifold, except that only the horizontal space is considered when computing the gradient and Hessian of the objective at each iteration.
Consider any objective function $f \colon \grass{r}{d} \to \mathbb{R}$ together with $\bar{f} = f \circ \pi \colon \stiefel{r}{d} \to \mathbb{R}$, and let $\bar{\bar{f}} \colon N \to \mathbb{R}$ be a smooth extension for $\bar{f}$ on a neighborhood $N \subseteq \rmspace{d}{r}$ of $\stiefel{r}{d}$.
For any $U \in \eqclass{U} \in \grass{r}{d}$ and $\eta \in \tspace{\eqclass{U}}{\grass{r}{d}}$, the gradient and Hessian of $f$ at $\eqclass{U}$ (and along $\eta$) can be computed by
\begin{align*}
    \gradient f(\eqclass{U}) &= \mathrm{D} \pi (U) \left[ \Pi_U^\mathrm{hor} \gradient \bar{\bar{f}} (U) \right], \\
    \hessian f(\eqclass{U})[\eta] &= \mathrm{D} \pi (U) \left[ 
        \Pi_U^\mathrm{hor} \hessian \bar{\bar{f}}(U)[V] - V U^\top \gradient \bar{\bar{f}} (U)
    \right]
\end{align*}
with $V = \left.\mathrm{D} \pi (U) \right|_{\hor{U}{\stiefel{r}{d}}}^{-1} \eta$ and where the gradient and Hessian of $\bar{\bar{f}}$ can be computed \enquote{as usual}. 
The actual numerical algorithm works directly with the objects in $\hor{U}{\stiefel{r}{d}}$.

\section{Simulation} \label{sec:simulation}

We consider the unforced and undamped Duffing oscillator described by $\dot{x}_1(t) = x_2(t)$ and $\dot{x}_2(t) =  x_1(t) - x_1(t)^3$. 
We consider the dynamics with sampling time $\Delta t = 0.1$. 

We let $V$ be spanned by the monomials up to degree 7, \ie the elements in the basis $B$ are of the form
\begin{equation*}
    \psi_i \colon (x_1, x_2) \mapsto x_1^a x_2^b,
\end{equation*}
where $a,b$ are non-negative integers with $a+b \leq 7$. 
This gives $M = \dim V = 36$.
We select $T \subset V$ to be the two-dimensional subspace spanned by $\psi_1 = \theta_1 \colon x \mapsto x_1$ and $\psi_2 = \theta_2 \colon x \mapsto x_2$, \ie $s=n=2$.
We collect $L = 5000$ training data points $x_i$ on the domain $X = [-1,1] \times [-1,1]$, together with their successor states $y_i = f(x_i)$, which we obtain from a numerical integrator with step-size control. 
From this, we can compute the \enquote{standard} EDMD-based Koopman linear system, where $K_B = (G_B G_B^\top)^{-1} G_B S_B^\top$.

To set up problem \eqref{eq:subspace_edmd_optimisation}, we sample $J = 100$ initial test states $\check{x}_i \in X$ and simulate for $N=20$ time steps. 
Then, using Manopt \cite{Boumal2014}, we solve \eqref{eq:subspace_edmd_optimisation} with $r=3$ to obtain the subspace $S$ and $W = T \oplus S$. 
Recall from \secref{sec:subspace_edmd} that $S$ is spanned by
\begin{equation*}
    (\varphi_1, \varphi_2, u_1^k \alpha_k, u_2^k \alpha_k, u_3^k \alpha_k),
\end{equation*}
where $u_j^k$ denotes the element $(k,j)$ of $U$, $\alpha_k = \varphi_{2+k}$ and the basis $(\varphi_1, \dots, \varphi_{36})$ is obtained by the \enquote{QR transformation} in \eqref{eq:qr_transformation}.

Finally, we compare the state predictions on $V$ and the reduced space $W$. 
For this, we report the mean prediction errors
\begin{align*}
    \varepsilon_\mathrm{r} &\colon x_0 \mapsto \frac{1}{20} \sum_{t=0}^{20} \| x(t, x_0) - \hat{x}(t,x_0; S) \|_2, &
    \varepsilon_\mathrm{f} &\colon x_0 \mapsto \frac{1}{20} \sum_{t=0}^{20} \| x(t, x_0) - \hat{x}(t,x_0) \|_2,
\end{align*}
over a horizon of length $20$, where $\hat{x}(t,x_0; S)$ and $\hat{x}(t,x_0)$ denote the state predictions obtained on $W$ and $V$, respectively. 

First, \figref{fig:duffing_1} shows the absolute difference in the mean errors on the domain $[-1,1] \times [-1,1]$. 
It seems that for most parts, both predictions work equally well; however, close to the two corners $(1,1)$ and $(-1,1)$, the prediction on $W$ starts to work better than the one on $V$. 
This is a first hint that the full Koopman model may have overfitted.
\begin{figure}[htb]
    \centering
    \input{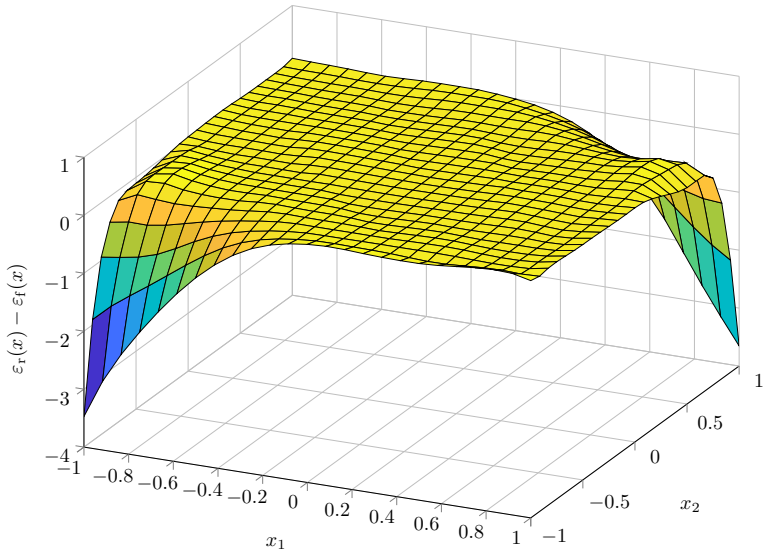}
    \caption{Absolute difference between the mean errors of the state predictions over the horizon $N=20$ based on the reduced and the full dictionary on the domain $[-1,1] \times [-1,1]$.}
    \label{fig:duffing_1}
\end{figure}

As \figref{fig:duffing_2} reveals, this trend is confirmed on the enlarged domain $[-2,2] \times [-2,2]$, where the full model produces significantly higher mean errors. 
This showcases the better generalisation capacities obtained by the reduced model.

\begin{figure}
    \centering
    \input{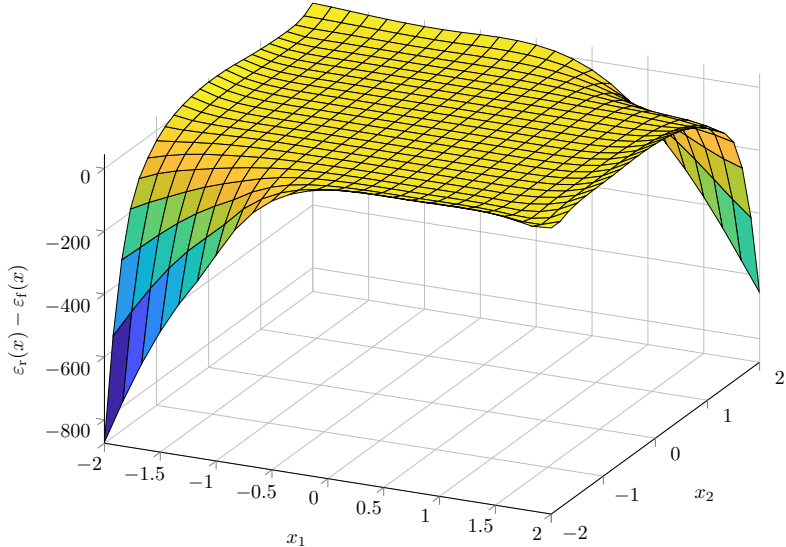}
    \caption{Absolute difference between the mean errors of the state predictions over the horizon $N=20$ based on the reduced and the full dictionary on the domain $[-2,2] \times [-2,2]$.}
    \label{fig:duffing_2}
\end{figure}

\section{Conclusion} \label{sec:conclusion}

We introduce an algorithm operating on the Grassmann manifold to learn an optimal sub-dictionary for building a data-driven surrogate model in the Koopman framework using extended dynamic mode decomposition. 
Our method aims at rendering the subspace, on which we compress the Koopman operator, approximately invariant under its action. 
The used measure for invariance is tailored towards prediction in state space, but other objectives can be easily integrated into the framework.
To design the subspace, we propose a trust-region algorithm to shape the Koopman dictionary of the underlying nonlinear system, leveraging optimisation on manifolds. 
To be more precise, we first divide the tangent spaces into their \textit{horizontal} and \textit{vertical} component. This allows us to rigorously derive a reformulation of the optimisation problem, in which the Stiefel manifold is replaced by the Grassmannian. 
Numerical tractability is maintained by relying on Stiefel representatives. 
Finally, we demonstrate the effectiveness of the proposed method using a numerical example. 

There are several open questions for future work, for example, a more detailed quantization w.r.t. the data requirements.
Furthermore, variants of the presented cost function are conceivable, depending on the intended use of the surrogate model or incorporating system knowledge such as translational or rotational invariances in the learning process. 
In addition, one may enforce the sparsity using the ideas of~\cite{pan2021sparsity}.

%%=============================================================%%
%% Appendix		       %%
%%=============================================================%%

\begin{appendices}

\section{Compression under a change-of-basis} \label{app:compression_change_of_basis}

First, we illustrate the basis dependence with a brief example.
\begin{example}
    Consider the basis $E = (\varphi_1, \dots, \varphi_M) \coloneqq (2 \psi_1, \dots, 2 \psi_M)$ for $V$, \ie $P = 2 I_M$. 
    For each $j \in \{1,\dots,M\}$, we observe that
    \begin{align*}
        C_V \varphi_j = 2 C_V \psi_j = 2 a_j^k \psi_k = 2 a_j^k (\frac{1}{2} \varphi_k) = a_j^k \varphi_k = C_V \psi_j
    \end{align*}
    holds by linearity of $C_V$, so that the matrix $K_E$ of $C_V$ relative to $E$ is given by $K_E = K_B$. 
    The lift $\Phi \colon \mathbb{R}^n \to \mathbb{R}^m$ relative to $E$ satisfies $\Phi = 2 \Psi$ for this example.
    This leads to the Koopman linear system
    \begin{align*}
        z(t+1) &= K_E^\top z(t), &
        z(0) &= \Phi(x_0),
    \end{align*}
    relative to $E$. 
    Hence, its trajectories satisfy 
    \begin{equation*}
        \forall x_0 \in X \colon z_E(\cdot, x_0) = 2 z_B(\cdot, x_0) ,
    \end{equation*}
    which showcases the variance of the lifted state to the chosen basis. 
    This is natural, since $z_E(\cdot,x_0)$ approximates $\Phi(x(\cdot,x_0))$ and not $\Psi(x(\cdot,x_0))$. %   
\end{example}
We now move on to prove \lemmaref{lemma:koopman_predicitions_basis}. 
The proof is supported by \figref{fig:change_of_basis}.

\begin{figure}
    \centering
    \begin{tikzpicture}[node distance=2cm]
        \node at (0,0) (V1) [] {$V$};
        \node (V2) [right of=V1] {$V$};
        \node (V3) [right of=V2] {$V$};
        \node (V4) [right of=V3] {$V$};
        \node (Rm1) [below of=V1] {$\mathbb{R}^M$};
        \node (Rm2) [below of=V2] {$\mathbb{R}^M$};
        \node (Rm3) [below of=V3] {$\mathbb{R}^M$};
        \node (Rm4) [below of=V4] {$\mathbb{R}^M$};

        \draw[->] (V1) -- node[above] {\footnotesize $\mathantt{id}_V$} (V2);
        \draw[->] (V2) -- node[above] {\footnotesize $C_V$} (V3);
        \draw[->] (V3) -- node[above] {\footnotesize $\mathantt{id}_V$} (V4);

        \draw[->] (Rm1) -- node[above] {\footnotesize $P^\top$} (Rm2);
        \draw[->] (Rm2) -- node[above] {\footnotesize $K_B$} (Rm3);
        \draw[->] (Rm3) -- node[above] {\footnotesize $P^{-\top}$} (Rm4);
        \draw[->] (Rm1) to [out=-30, in=210] node[above] {\footnotesize $K_E$} (Rm4);

        \draw[->] (Rm1) -- node[left] {\footnotesize $T_E$} (V1);
        \draw[->] (Rm2) -- node[left] {\footnotesize $T_B$} (V2);
        \draw[->] (Rm3) -- node[left] {\footnotesize $T_B$} (V3);
        \draw[->] (Rm4) -- node[left] {\footnotesize $T_E$} (V4);
    \end{tikzpicture}
    \caption{Commutative diagram, showcasing the change-of-basis. With $T_B$ we denote the isomorphism of vector spaces that sends any $(v^1, \dots, v^m) \in \mathbb{R}^m$ to $v^j \psi_j \in V$, and analogously for $T_E$.}
    \label{fig:change_of_basis}
\end{figure}
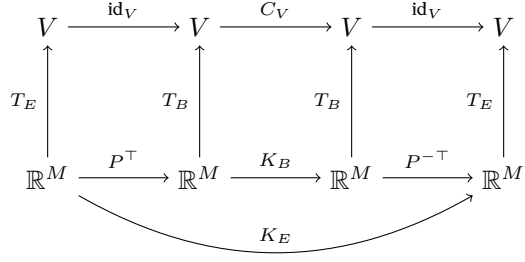

\begin{proof}[Proof of \lemmaref{lemma:koopman_predicitions_basis}]
    Fix any $x_0 \in X$, and introduce for notational convenience $w(\cdot,x_0) \coloneqq P z_B(\cdot,x_0)$. 
    We thus have to show $z_E(\cdot, x_0) = w(\cdot, x_0)$.
    
    We do this by showing that (i) $z_B(\cdot,x_0)$ and $w(\cdot,x_0)$ satisfy the same difference equation, and (ii) that they have the same initial value. 

    By construction, $w(\cdot, x_0)$ is the trajectory resulting from the transformed system
    \begin{align*}
        w(t+1) &= P K_B^\top P^{-1} w(t), &
        w(0) &= P \Psi(x_0) .
    \end{align*}
    Hence, we wish to relate $K_E$ to $K_B$ and $P$. 
    Denoting
    \begin{equation*}
        P^{-1} = \Matrix{
            q_1^1 & \dots & q_1^M \\
            \vdots & \ddots & \vdots \\
            q_M^1 & \dots & q_M^M
        }
    \end{equation*}
    allows us to express the basis vectors in $B$ as linear combinations of the basis vectors in $E$.
    To see this, let $\Psi$ and $\Phi$ denote the lifts relative to $B$ and $E$, respectively. 
    By definition of $P$, we have
    \begin{equation} \label{eq:transformed_lifts}
        \forall x \in X \colon \Phi(x) = P \Psi(x) .
    \end{equation}
    This implies for each $j \in \{1, \dots, M \}$ the equality
    \begin{equation*}
        \psi_j = q_j^k \varphi_k .
    \end{equation*} 
    Recall that $K_B$ is described by \eqref{eq:compression_dictionary_basis}.
    Then, the matrix representation $K_E$ of $C_V$ relative to $E$ is given by
    \begin{equation*}
        C_V \varphi_j = p_j^i C_V \psi_i = p_j^i a_i^s \psi_s = p_j^i a_i^s q_s^k \varphi_k \eqqcolon \tilde{a}_j^k \varphi_k .
    \end{equation*}
    In matrix form, this is expressed as
    \begin{align*}
        K_E^\top &= \Matrix{
            \tilde{a}_1^1 & \dots &  \tilde{a}_1^M \\
            \vdots & \ddots & \vdots \\
            \tilde{a}_M^1 & \dots &  \tilde{a}_M^M
        } = \Matrix{
            p_1^i a_i^s q_s^1 & \dots & p_1^i a_i^s q_s^M \\
            \vdots & \ddots & \vdots \\
            p_M^i a_i^s q_s^1 & \dots & p_M^i a_i^s q_s^M
        } \\
        &= \Matrix{
            p_1^1 & \dots & p_1^M \\
            \vdots & \ddots & \vdots \\
            p_M^1 & \dots & p_M^M
        } \Matrix{
            a_1^1 & \dots & a_1^M \\
            \vdots & \ddots & \vdots \\
            a_M^1 & \dots & a_M^M
        } \Matrix{
            q_1^1 & \dots & q_1^M \\
            \vdots & \ddots & \vdots \\
            q_M^1 & \dots & q_M^M
        } \\
        &= P K_B^\top P^{-1}
    \end{align*}
    Hence, $P K_B^\top P^{-1} = K_E^\top$, which implies that $w(\cdot,x_0)$ and $z_E(\cdot,x_0)$ satisfy the same difference equation. 
    This shows (i).

    Additionally, \eqref{eq:transformed_lifts} implies
    \begin{equation*}
        w(0,x_0) = P z_B(0,x_0) = P \Psi(x_0) = \Phi(x_0) = z_E(0,x_0) ,
    \end{equation*}
    showing (ii).
                
    Hence, $z_E(\cdot, x_0) = w(\cdot,x_0) = P z_B(\cdot,x_0)$ as claimed.
\end{proof}

\section{Coordinate Projection}\label{app:coordinate_projection}

\begin{proposition} \label{prop:left_inverse_lift}
    Let $B = (\psi_1, \dots, \psi_M)$ be an arbitrary basis for a coordinate subspace $V$.
    Then, the lift $\Psi \colon \mathbb{R}^n \to \operatorname{im} \Psi$ relative to $B$ has an inverse $\Psi^{-1} \colon \operatorname{im} \Psi \to \mathbb{R}^n$. 
\end{proposition}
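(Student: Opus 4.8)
The plan is to exhibit an explicit linear left inverse of $\Psi$ and then conclude that $\Psi$ is a bijection onto its image. Since $V$ is a coordinate subspace and $B = (\psi_1, \dots, \psi_M)$ is a basis for $V$, every coordinate function $\theta_k$, $k \in \{1, \dots, n\}$, lies in $V$, hence admits a unique expansion $\theta_k = c_k^j \psi_j$ with coefficients $c_k^j \in \mathbb{R}$. Collecting these coefficients yields the coordinate matrix $\Pi_B$ relative to $B$ introduced in \secref{sec:state_predicitions_koopman}.

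The key step is to evaluate the functional identity $\theta_k = c_k^j \psi_j$ at an arbitrary point $x \in \mathbb{R}^n$. This gives $x_k = \theta_k(x) = c_k^j \psi_j(x)$ for each $k \in \{1, \dots, n\}$, and stacking these $n$ equalities is exactly the matrix identity $x = \Pi_B \Psi(x)$. Thus the linear map $\mathbb{R}^M \to \mathbb{R}^n$, $z \mapsto \Pi_B z$, composed with $\Psi$ is the identity on $\mathbb{R}^n$.

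From here the conclusion is immediate: a map admitting a left inverse is injective, so $\Psi$ is one-to-one; and $\Psi$ is onto $\operatorname{im} \Psi$ by the very definition of its codomain. Hence $\Psi \colon \mathbb{R}^n \to \operatorname{im} \Psi$ is a bijection, and its inverse is given by $\Psi^{-1} = \left. (z \mapsto \Pi_B z) \right|_{\operatorname{im} \Psi}$, in particular the restriction of a linear map. I do not anticipate a genuine obstacle; the only point meriting care is that $\theta_k = c_k^j \psi_j$ is an equality of functions, so that pointwise evaluation and stacking over $k$ legitimately produces $\Pi_B \Psi(x) = x$. It is worth noting that this argument proves slightly more than the statement — it identifies $\Psi^{-1}$ with the restriction of the linear map encoded by $\Pi_B$ — which is precisely what is exploited when defining the coordinate projection $\pi_\Psi$ afterwards.
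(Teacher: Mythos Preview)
Your proposal is correct and follows essentially the same approach as the paper: both exhibit $\Pi_B$ as a linear left inverse by evaluating the identity $\theta_k = c_k^j \psi_j$ pointwise to obtain $\Pi_B \Psi(x) = x$, and then conclude bijectivity onto the image. The paper additionally spells out the right-inverse identity $(\Psi \circ \Psi^{-1})(z) = z$ explicitly, whereas you argue it via the general fact that a left-invertible map is injective and hence bijective onto its image; these are equivalent.
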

\begin{proof}
    The existence of the inverse can be shown with the coordinate matrix $\Pi_B$ relative to $B$. 
    By construction of $\Pi_B$, we have 
    \begin{equation*}
        \forall x \in \mathbb{R}^n \colon \Pi_B \Psi(x) = (c_1^j \psi_j(x), \dots, c_n^j \psi_j(x))
        = (\theta_1(x), \dots, \theta_n(x)) 
        = x ,
    \end{equation*}
    where the $c_i^j$ are the components of the coordinate functions relative to $B$. 
    Hence, the map $$\Psi^{-1} \coloneqq \left. \Pi_B \right|_{\operatorname{im} \Psi} \colon \operatorname{im} \Psi \to \mathbb{R}^n$$ is a left inverse for $\Psi$. 

    Additionally, from the above computation we have for each $z = \Psi(x) \in  \operatorname{im} \Psi$ that
    \begin{equation*}
        (\Psi \circ \Psi^{-1})(z) = \Psi(\Pi_B \Psi(x)) = \Psi(x) = z,
    \end{equation*}
    which establishes the claim.
\end{proof}

\begin{corollary}
    Let $B = (\psi_1, \dots, \psi_M)$ be an arbitrary basis for a coordinate subspace $V$.
    
    Then, the coordinate projection $\pi_\Psi$ relative to $B$ satisfies $\left. \pi_\Psi \right|_{\operatorname{im} \Psi} = \mathantt{id}_{\operatorname{im} \Psi}$. 
\end{corollary}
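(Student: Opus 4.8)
The plan is to deduce this directly from \propref{prop:left_inverse_lift}, which already does the real work. Recall that the coordinate projection is defined by $\pi_\Psi(z) = \Psi(\Pi_B z)$, and that in the proof of \propref{prop:left_inverse_lift} we established the identity $\Pi_B \Psi(x) = x$ for every $x \in \mathbb{R}^n$, using the expansion $\theta_k = c_k^j \psi_j$ of the coordinate functions relative to $B$.

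First I would take an arbitrary element $z \in \operatorname{im} \Psi$ and write it as $z = \Psi(x)$ for some (unique, since $\Psi$ is one-to-one) $x \in \mathbb{R}^n$. Then I would compute
\begin{equation*}
    \pi_\Psi(z) = \Psi(\Pi_B z) = \Psi(\Pi_B \Psi(x)) = \Psi(x) = z ,
\end{equation*}
where the third equality is exactly the identity $\Pi_B \Psi(x) = x$ from \propref{prop:left_inverse_lift}. Since $z \in \operatorname{im} \Psi$ was arbitrary, this shows $\left. \pi_\Psi \right|_{\operatorname{im} \Psi} = \mathantt{id}_{\operatorname{im} \Psi}$, and in particular confirms that $\pi_\Psi$ is a well-defined map onto $\operatorname{im} \Psi$ (its image is contained in $\operatorname{im} \Psi$ by construction, and it restricts to the identity there). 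One could equivalently phrase this as $\pi_\Psi = \Psi \circ \Psi^{-1}$ on $\operatorname{im} \Psi$ with $\Psi^{-1} = \left. \Pi_B \right|_{\operatorname{im}\Psi}$, which is the content of the final display in the proof of \propref{prop:left_inverse_lift}.

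There is essentially no obstacle here: the only thing to be careful about is not to assume anything about $\Pi_B z$ when $z \notin \operatorname{im}\Psi$ — the claim is only about the restriction to $\operatorname{im}\Psi$, and the argument above only ever evaluates $\Pi_B$ on points of $\operatorname{im}\Psi$, so no issue arises.
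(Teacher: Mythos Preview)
Your proof is correct and follows essentially the same approach as the paper: both deduce the result directly from \propref{prop:left_inverse_lift} by writing $\pi_\Psi(z) = \Psi(\Pi_B z) = (\Psi \circ \Psi^{-1})(z) = z$ for $z \in \operatorname{im}\Psi$. The paper's version is just a bit terser, using the notation $\Psi^{-1} = \left.\Pi_B\right|_{\operatorname{im}\Psi}$ directly rather than unpacking $z = \Psi(x)$.
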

\begin{proof}
    This follows directly from \propref{prop:left_inverse_lift}.
    For each $z \in \operatorname{im}(\Psi)$, we have by definition 
    \begin{equation*}
        \pi_\Psi(z) = \Psi(\Pi_B z) = (\Psi \circ \Psi^{-1})(z) = z .
    \end{equation*}
\end{proof}

\begin{proof}[Proof of \cororef{coro:state_prediction}]
    Let $P \in \gl{m}$ be the change-of-basis matrix from $B$ to $E$.
    Then, \lemmaref{lemma:koopman_predicitions_basis} gives $z_E(\cdot, x_0) = P z_B(\cdot, x_0)$. 
    Additionally, we want to express the coordinate matrix $\Pi_E$ relative to $E$ in terms of $\Pi_B$ and $P$.
    Since $V$ is a coordinate subspace and $E$ is a basis for $V$, we have for each $j \in \{1, \dots, n\}$ the existence of real numbers $d_j^k$ such that
    \begin{equation*}
        \theta_j = d_j^k \varphi_k = d_j^k p_k^i \psi_i = c_j^i \psi_i
    \end{equation*}
    holds, with $c_j^i = d_j^k p_k^i$. 
    Putting this in matrix form, we obtain
    \begin{equation*}
        \Pi_B = \Matrix{
            c_1^1 & \dots & c_1^m \\
            \vdots & \ddots & \vdots \\
            c_n^1 & \dots & c_n^m
        } = \Matrix{
            d_1^1 & \dots & d_1^m \\
            \vdots & \ddots & \vdots \\
            d_n^1 & \dots & d_n^m
        } \Matrix{
            p_1^1 & \dots & p_1^m \\
            \vdots & \ddots & \vdots \\
            p_m^1 & \dots & p_m^m
        } = \Pi_E P .
    \end{equation*}
    This is equivalent to $\Pi_E = \Pi_B P^{-1}$, which finally implies
    \begin{equation*}
        \Pi_E z_E(\cdot, x_0) = \Pi_E P z_B(\cdot,x_0) = \Pi_B P^{-1} P z_B(\cdot,x_0) 
        = \Pi_B z_B(\cdot,x_0) ,
    \end{equation*}
    as claimed.
\end{proof}

\section{Orthonormal Basis} \label{app:ONB}

\begin{lemma} \label{lemma:edmd_stiefel_basis}
    Let $R$ be a finite-dimensional subspace of $H$, and suppose that $(\alpha_1, \dots, \alpha_d)$ is a basis for $R$.
    
    Then, for each subspace $S \subset R$ with $0 < \dim S = r < \dim R$, there is a basis $(\beta_1, \dots, \beta_r)$ for $S$ such that the matrix $U$ of components relative to $(\alpha_{1}, \dots, \alpha_d)$ has orthonormal columns.
\end{lemma}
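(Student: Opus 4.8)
The plan is to transport the problem to $\mathbb{R}^{d}$ via the coordinate isomorphism attached to the basis $(\alpha_1, \dots, \alpha_d)$, and then simply invoke the existence of an orthonormal basis for any subspace of a Euclidean space.

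First I would introduce the linear map $\iota \colon \mathbb{R}^{d} \to R$ defined by $\iota(u) \coloneqq u^{k} \alpha_{k}$ for $u = (u^{1}, \dots, u^{d})$. Since $(\alpha_1, \dots, \alpha_d)$ is a basis for $R$, the map $\iota$ is a linear isomorphism; hence $\tilde{S} \coloneqq \iota^{-1}(S)$ is an $r$-dimensional linear subspace of $\mathbb{R}^{d}$. Equipping $\mathbb{R}^{d}$ with its standard Euclidean inner product, $\tilde{S}$ is a finite-dimensional inner product space, so it admits an orthonormal basis $(u_1, \dots, u_r)$ — for instance by applying the Gram--Schmidt procedure to any basis of $\tilde{S}$. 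Collecting these vectors as the columns of $U = \Matrix{u_1 & \cdots & u_r} \in \rmspace{d}{r}$, the orthonormality of the $u_j$ is precisely the identity $U^{\top} U = I_{r}$.

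Finally I would set $\beta_{j} \coloneqq \iota(u_j) = u_{j}^{k} \alpha_{k}$ for $j \in \{1, \dots, r\}$. Because $\iota$ restricts to an isomorphism from $\tilde{S}$ onto $S$ and $(u_1, \dots, u_r)$ is a basis of $\tilde{S}$, the tuple $(\beta_1, \dots, \beta_r)$ is a basis for $S$; and by construction the matrix $U$ is exactly the matrix of components of $(\beta_1, \dots, \beta_r)$ relative to $(\alpha_1, \dots, \alpha_d)$, which has orthonormal columns as required.

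There is no genuine obstacle here: the statement reduces to the elementary fact that every finite-dimensional real inner product space has an orthonormal basis. The only points deserving a modicum of care are to keep the two roles of $U$ straight (coordinate matrix of the new basis versus Stiefel representative of the subspace) and to observe that the hypotheses $0 < r < \dim R$ play no essential role in the argument beyond guaranteeing that $S$ is a proper, nontrivial subspace, consistent with the surrounding construction in \secref{sec:subspace_edmd}.
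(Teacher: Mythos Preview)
Your proof is correct and follows essentially the same approach as the paper: transport $S$ to $\mathbb{R}^{d}$ via the coordinate isomorphism induced by $(\alpha_1,\dots,\alpha_d)$, apply Gram--Schmidt to obtain an orthonormal basis of the image subspace, and pull it back. The paper's argument differs only in notation (using $\Phi$ and $\hat{S}$ where you use $\iota$ and $\tilde{S}$).
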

\begin{proof}
    Since $(\alpha_{1}, \dots, \alpha_d)$ is a basis for $R$, the subspace $R$ can be identified with $\mathbb{R}^d$ via the isomorphism of vector spaces given by
    \begin{equation*}
        \Phi \colon \begin{cases*}
            \mathbb{R}^d \to R \\
            u = (u^1, \dots, u^d) \mapsto u^k \alpha_k
        \end{cases*}
    \end{equation*}
    Now, fix any $r$-dimensional subspace $S$ of $R$.
    Let $\hat{S}$ be the $r$-dimensional subspace of $\mathbb{R}^d$ that satisfies $\ran \left. \Phi \right|_{\hat{S}} = S$.
    Equip $\hat{S}$ with an inner product by restricting the standard inner product 
    \begin{equation*}
        \forall u,v \in \mathbb{R}^d \colon \iprod{u}{v}_{\mathbb{R}^d} \coloneqq u^\top v
    \end{equation*}
    in $\mathbb{R}^d$ to $\hat{S}$. 
    Select an orthonormal basis $(u_1, \dots, u_r)$ for $\hat{S}$ relative to this inner product (\eg by relying on the Gram-Schmidt process), and denote for each $j \in \{1, \dots, r \}$ and $k \in \{1, \dots, d\}$ the $k$-th component of $u_j$ by $u_j^k$. 
    Then, $(\beta_1, \dots, \beta_r) \coloneqq (\Phi u_1, \dots, \Phi u_r)$ is a basis for $S$, and we have
    \begin{equation*}
        \forall j \in \{1, \dots, r \} \colon \beta_j = \Phi u_j = u_j^k \alpha_{k}
    \end{equation*}
    by definition of $\Phi$. 
    Thus, the matrix
    \begin{equation*}
        U = \Matrix{u_1^1 & \dots & u_r^1 \\ \vdots & \ddots & \vdots \\ u_1^d & \dots & u_r^d} 
        = \Matrix{u_1 & \dots & u_r}
    \end{equation*}
    of components relative to $(\alpha_1, \dots, \alpha_d)$ satisfies
    \begin{equation*}
        U^\top U = \Matrix{u_1^\top \\ \vdots \\ u_r^\top} \Matrix{u_1 & \dots & u_r}
        = \Matrix{
            u_1^\top u_1 & \dots & u_1^\top u_r \\
            \vdots & \ddots & \vdots \\
            u_r^\top u_1 & \dots & u_r^\top u_r
        } 
        = \Matrix{
            \iprod{u_1}{u_1}_{\mathbb{R}^d} & \dots & \iprod{u_1}{u_r}_{\mathbb{R}^d} \\
            \vdots & \ddots & \vdots \\
            \iprod{u_r}{u_1}_{\mathbb{R}^d} & \dots & \iprod{u_r}{u_r}_{\mathbb{R}^d}
        } 
        = I_r, 
    \end{equation*}
    since $(u_1, \dots, u_r)$ forms an orthonormal basis for $\hat{S}$. 
    This completes the proof.
\end{proof}

\section{Computing a Compression} \label{app:computing_compression}

For this work to be self-contained, we derive a general procedure for obtaining a compression of $K$ on a finite-dimensional subspace $V$ of $H$ in our notation. 
Here, we also use the results from \appref{app:projections}.

Let us refine the notation a bit compared to the main text. 
If $P_V$ is a projection of $H$ with $\ran P = V$ and $\ker P = Z$, then we call $C_V = P_V \left. K \right|_V$ the compression of $K$ on $V$ along $Z$.
As a first step, we show how a compression can be derived from a non-degenerate bilinear form. 
Then, we introduce a way to define this bilinear form from data, which finally leads to the EDMD compression from \secref{sec:subspace_edmd}
Throughout, we fix a finite-dimensional subspace $V$ of $H$, and let $B = (\psi_1, \dots, \psi_M)$ be basis for $V$. 

\subsection*{Based on a Bilinear Form}
To compute a compression of $K$ in the first place, we must have access to some information on how the Koopman operator acts on vectors in $V$. 
Here, we assume that this information can be expressed in terms of a \emph{bilinear form}
\begin{equation*}
    b \colon \begin{cases*}
        H \times H \to \mathbb{R} \\
        (\psi, \varphi) \mapsto b(\psi, \varphi)
    \end{cases*}
\end{equation*}
The key idea in working with this bilinear form is that we can evaluate for each $j,k \in \{1, \dots, M\}$ the expression $b(K \psi_j, \psi_k)$, which will allow us to compute the matrix representation of the compression.
In the actual algorithm that we use and which we present in \secref{sec:subspace_edmd}, the bilinear form will be defined from data, but we present here the approach in full generality here.  

However, not any bilinear form suffices; we have one critical requirement on the bilinear form. 
We say that $b$ is \emph{non-degenerate on $V$} if the linear map
\begin{equation*}
    b^\prime \colon \begin{cases*}
        V \to V^\ast \\
        \psi \mapsto b(\cdot, \psi)
    \end{cases*}
\end{equation*}
is one-to-one. 
\begin{corollary} \label{coro:bilinear_iso}
    Let $V \subset H$ be a finite-dimensional subspace of $H$, and suppose that $b \colon H \times H \to \mathbb{R}$ is a bilinear form on $H$.
    
    Then, if $b$ is non-degenerate on $V$, $b^\prime$ is a vector space isomorphism.
\end{corollary}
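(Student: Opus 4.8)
The plan is to reduce the statement to the elementary fact that an injective linear map between vector spaces of equal finite dimension is bijective, and that a bijective linear map is a vector space isomorphism. The only things that genuinely need to be checked are (a) that $b^\prime$ is a well-defined linear map into $V^\ast$, and (b) the dimension count.

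First I would verify that $b^\prime$ is well-defined and linear. For each fixed $\psi \in V$, the assignment $\varphi \mapsto b(\varphi, \psi)$ is linear in $\varphi$ by bilinearity of $b$, so $b(\cdot, \psi)$ restricted to $V$ indeed defines an element of the algebraic dual $V^\ast$. Linearity of $\psi \mapsto b(\cdot, \psi)$ then follows from linearity of $b$ in its second slot: for $\psi_1, \psi_2 \in V$ and scalars $\lambda_1, \lambda_2$, one has $b^\prime(\lambda_1 \psi_1 + \lambda_2 \psi_2) = b(\cdot, \lambda_1 \psi_1 + \lambda_2 \psi_2) = \lambda_1 b(\cdot, \psi_1) + \lambda_2 b(\cdot, \psi_2) = \lambda_1 b^\prime(\psi_1) + \lambda_2 b^\prime(\psi_2)$, evaluated at any $\varphi \in V$.

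Next I would invoke finite-dimensionality. Since $V \subset H$ is finite-dimensional, its algebraic dual satisfies $\dim V^\ast = \dim V < \infty$. Applying the rank–nullity theorem to the linear map $b^\prime \colon V \to V^\ast$, the hypothesis that $b$ is non-degenerate on $V$ means $\ker b^\prime = \{0\}$, hence $\operatorname{rank} b^\prime = \dim V = \dim V^\ast$, so $b^\prime$ is surjective as well. Therefore $b^\prime$ is a bijective linear map, and its set-theoretic inverse is automatically linear; thus $b^\prime$ is a vector space isomorphism.

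I do not anticipate any real obstacle: the content is a one-line dimension argument. The only point to be mildly careful about is that finite-dimensionality of $V$ is essential — it is what guarantees $\dim V^\ast = \dim V$ and hence lets injectivity upgrade to surjectivity — so I would state that hypothesis explicitly where it is used, but it is already part of the assumptions of the corollary.
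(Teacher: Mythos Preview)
Your proposal is correct and matches the paper's own proof essentially line for line: the paper also invokes the rank--nullity theorem together with $\dim V^\ast = \dim V$ to upgrade injectivity of $b^\prime$ to surjectivity. The only difference is that you spell out why $b^\prime$ is well-defined and linear, whereas the paper simply asserts this by assumption; your added detail is fine but not strictly needed.
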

\begin{proof}
    By assumption, $b^\prime$ is linear and one-to-one, so it is left to show that $b^\prime$ is onto. 

    Linearity and injectivity of $b^\prime$ imply $\dim \ker b^\prime = 0$, and then the rank-nullity theorem implies $\dim \ran b^\prime = \dim V = \dim V^\ast$. 
    Hence, $\ran b^\prime = V^\ast$, and the assertion follows.
\end{proof}
Now, define the matrices
\begin{align} \label{eq:matrices_bilinear}
    H_B &= \Matrix{
        b(\psi_1,\psi_1) & \dots & b(\psi_M,\psi_1) \\
        \vdots & \ddots & \vdots \\
        b(\psi_1,\psi_M) & \dots & b(\psi_M,\psi_M)  
    } , & 
    A_B &= \Matrix{
        b(K \psi_1,\psi_1) & \dots & b(K \psi_M,\psi_1) \\
        \vdots & \ddots & \vdots \\
        b(K \psi_1,\psi_M) & \dots & b(K \psi_M,\psi_M)  
    } ,
\end{align}
relative to the basis $B$.
Here, it becomes clear that we must be able to evaluate $b(K \psi_j, \psi_k) \in \mathbb{R}$, in order to build $A_B$. 
Note that $K \psi_j \in H$ itself must not be available.
If that is possible, we have the following result.
\begin{theorem} \label{theo:koopman_compression}
    Let $V \subset H$ be a finite-dimensional subspace of $H$, and suppose that $B = (\psi_1, \dots, \psi_M)$ is a basis for $V$. 
    Further, assume that a bilinear form $b \colon H \times H \to \mathbb{R}$ is available, which is non-degenerate on $V$, and let $H_B, A_B$ be the matrices relative to $B$ defined in \eqref{eq:matrices_bilinear}.
    For each $j \in \{1, \dots, M\}$, introduce $\bar{\sigma}^j \coloneqq b(\cdot, \psi_j) \colon H \to \mathbb{R}$, and consider the subspace $Z = \ker \bar{\sigma}^1 \cap \dots \cap \ker \bar{\sigma}^M$ of $H$.

    Then, the matrix representation $K_B$ of the compression $C_V$ of $K$ on $V$ along $Z$ is 
    \begin{equation*}
        K_B = H_B^{-1} A_B .
    \end{equation*}
\end{theorem}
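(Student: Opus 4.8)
The plan is to reduce the statement to the single matrix identity $A_B = H_B K_B$, after first checking that all the objects in it are well-defined. \emph{Step 1 (invertibility of $H_B$):} By \cororef{coro:bilinear_iso} the map $b' \colon V \to V^\ast$, $\psi \mapsto b(\cdot,\psi)$, is a vector-space isomorphism. Evaluating $b'(\psi_j)(\psi_i) = b(\psi_i,\psi_j)$ shows that, relative to the basis $B$ and the dual basis of $V^\ast$, the matrix of $b'$ is $H_B^\top$; hence $H_B^\top$, and therefore $H_B$, is invertible.

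\emph{Step 2 (the compression along $Z$ exists, i.e. $H = V \oplus Z$):} Identify each $v = v^k\psi_k \in V$ with $\hat v = (v^1,\dots,v^M)^\top \in \mathbb{R}^M$. Since $Z = \bigcap_i \ker \bar\sigma^i$ with $\bar\sigma^i = b(\cdot,\psi_i)$, one has $v \in Z$ if and only if $b(v,\psi_i) = \sum_k (H_B)_{ik} v^k = 0$ for all $i$, equivalently $H_B \hat v = 0$; invertibility of $H_B$ then gives $V \cap Z = \{0\}$. For $V + Z = H$, given $\chi \in H$ I would set $\hat v \coloneqq H_B^{-1}\bigl(b(\chi,\psi_1),\dots,b(\chi,\psi_M)\bigr)^\top$ and $v = v^k\psi_k \in V$; by construction $b(\chi - v,\psi_i) = 0$ for every $i$, so $\chi - v \in Z$ and $\chi = v + (\chi-v) \in V + Z$. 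Thus $H = V \oplus Z$, the projection $P_V$ onto $V$ along $Z$ is defined, $C_V = P_V\left.K\right|_V \in \End{V}$ makes sense, and its matrix $K_B$ relative to $B$ defined through \eqref{eq:compression_dictionary_basis} exists and is unique.

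\emph{Step 3 (identifying $K_B$):} For each $j$, $C_V \psi_j = P_V(K\psi_j)$ is, by definition of the projection along $Z$, the unique element of $V$ with $K\psi_j - C_V\psi_j \in Z$. Writing $C_V\psi_j = a_j^k\psi_k$ and applying each functional $\bar\sigma^i$ to $K\psi_j - a_j^k\psi_k \in Z$ yields, for all $i,j$,
\begin{equation*}
    b(K\psi_j,\psi_i) \;=\; a_j^k\, b(\psi_k,\psi_i).
\end{equation*}
The left-hand side is $(A_B)_{ij}$, and since $(K_B)_{kj} = a_j^k$ the right-hand side is $\sum_k (H_B)_{ik}(K_B)_{kj} = (H_B K_B)_{ij}$. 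Hence $A_B = H_B K_B$, and multiplying by $H_B^{-1}$ from the left gives $K_B = H_B^{-1} A_B$.

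I do not anticipate a serious obstacle: once one commits to coordinates relative to $B$, everything is linear algebra. The only genuinely delicate point is Step 2, the decomposition $H = V \oplus Z$ of the infinite-dimensional space $H$; it goes through precisely because finite-dimensionality of $V$ turns the conditions defining $Z$ into a square, invertible linear system governed by $H_B$. A minor pitfall to watch is index bookkeeping: $A_B$ and $H_B$ carry the test function $\psi_i$ as the row index and the argument $\psi_j$ as the column index, and $C_V\psi_j$ occupies the $j$-th column of $K_B$, so the scalar identity in Step 3 must be assembled as $A_B = H_B K_B$ and not as its transpose.
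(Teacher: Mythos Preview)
Your proof is correct and follows the same three-step structure as the paper: invertibility of $H_B$ from non-degeneracy of $b$ on $V$, the decomposition $H = V \oplus Z$, and then reading off $K_B$ from the projection condition $K\psi_j - C_V\psi_j \in Z$. The paper's execution differs only cosmetically: it introduces the dual basis $(\nu^j)$ of $B$, the change-of-basis matrix $Y$ from $(\sigma^j)=(b'(\psi_j))$ to $(\nu^j)$, shows $YH_B=I_M$ (hence $Y=H_B^{-1}$), obtains $H=V\oplus Z$ by invoking \theoref{theo:direct_sum_finite_dual} after extending $\nu^j$ to $\bar\nu^j=y_k^j\bar\sigma^k$, and finally gets $a_j^k=\bar\nu^k(K\psi_j)$, i.e.\ $K_B=YA_B=H_B^{-1}A_B$. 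Your direct coordinate argument avoids the dual-basis detour and the auxiliary theorem, which makes it more self-contained; the paper's version, in turn, reuses the abstract projection lemma it has already set up.
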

\begin{proof}
    As an initial remark, note that in the notation of the theorem, we can equivalently express
    \begin{align*}
        H_B &= \Matrix{
            \bar{\sigma}^1(\psi_1) & \dots & \bar{\sigma}^1(\psi_M) \\
            \vdots & \ddots & \vdots \\
            \bar{\sigma}^M(\psi_1) & \dots & \bar{\sigma}^M(\psi_M) 
        } , & 
        A_B &= \Matrix{
            \bar{\sigma}^1(K \psi_1) & \dots & \bar{\sigma}^1(K \psi_M) \\
            \vdots & \ddots & \vdots \\
            \bar{\sigma}^M(K \psi_1) & \dots & \bar{\sigma}^M(K \psi_M)
        } .
    \end{align*}
    
    For the proof, we proceed step-by-step. 
    First, we show that $H_B$ has full rank.
    Then, we establish that $H$ is the direct sum of $V$ and $Z$, such that $C_V$ exists, \cf \theoref{theo:projection_direct_sum}. 
    Finally, we can calculate the matrix representation by using the projection $P_V$ on $V$ along $Z$.

    For the first step, let $B^\ast = (\nu^1, \dots, \nu^M)$ denote the unique dual basis of $B$.
    Under the made assumptions, \cororef{coro:bilinear_iso} implies that
    \begin{equation*}
        (\sigma^1, \dots, \sigma^M) \coloneqq (\left. \bar{\sigma}^1 \right|_V, \dots, \left. \bar{\sigma}^M \right|_V) = (b^\prime(\psi_1), \dots, b^\prime(\psi_M))
    \end{equation*}
    is a basis for $V^\ast$. 
    This implies the existence of real numbers $y_k^j$ such that 
    \begin{equation*}
        \forall j \in \{1, \dots, M\} \colon \nu^j = y_k^j \sigma^k 
    \end{equation*}
    holds. 
    Collecting these components in matrix from yields
    \begin{equation*}
        Y \coloneqq \Matrix{
            y_1^1 & \dots & y_M^1 \\
            \vdots & \ddots & \vdots \\
            y_1^M & \dots & y_M^M
        } \in \gl{M} ,
    \end{equation*}
    which has full rank, because $Y$ is the change-of-basis matrix from $(\sigma^1, \dots, \sigma^M)$ to $B^\ast$.
    Because $B^\ast$ is the dual basis of $B$, we additionally have
    \begin{equation*}
        I_M = \Matrix{
            \nu^1(\psi_1) & \dots & \nu^1(\psi_M) \\
            \vdots & \ddots & \vdots \\
            \nu^M(\psi_1) & \dots & \nu^M(\psi_M) \\
        }
        = \Matrix{
            y_1^1 & \dots & y_M^1 \\
            \vdots & \ddots & \vdots \\
            y_1^M & \dots & y_1^M
        } \Matrix{
            \bar{\sigma}^1(\psi_1) & \dots & \bar{\sigma}^1(\psi_M) \\
            \vdots & \ddots & \vdots \\
            \bar{\sigma}^M(\psi_1) & \dots & \bar{\sigma}^M(\psi_M)
        } = Y H_B .
    \end{equation*}
    This implies $H_B = Y^{-1} \in \gl{M}$, as required, from which we can also conclude $Y = H_B^{-1}$.
    
    For the second step, we use the previous result to extend the dual basis $B^\ast$ to all of $H$ by defining for each $j \in \{1, \dots, M\}$ the linear functional
    \begin{equation*}
        \bar{\nu}^j \coloneqq y_k^j \bar{\sigma}^k \colon H \to \mathbb{R} .
    \end{equation*}
    We now wish to show that $Z = \ker \bar{\nu}^1 \cap \dots \cap \ker \bar{\nu}^M$ is true. 
    Then, $H = V \oplus Z$ follows from \theoref{theo:direct_sum_finite_dual}. 
    To see this equality, note that for each $\psi \in H$ the equivalences
    \begin{align*}
        &\forall j \in \{1, \dots, M\} \colon 0 = \bar{\nu}^j(\psi) = y_k^j \bar{\sigma}^k(\psi) &
        &\iff &
        &\Matrix{
            0 \\ \vdots \\ 0
        } = \Matrix{y_1^1 & \dots & y_M^1 \\
            \vdots & \ddots & \vdots \\
            y_1^M & \dots & y_M^M
        } \Matrix{
            \bar{\sigma}^1(\psi) \\ \vdots \\ \bar{\sigma}^m(\psi)
        } \\
        &&&\iff &
        &\forall j \in \{1, \dots, M\} \colon 0 = \bar{\sigma}^j(\psi) ,
    \end{align*}
    hold, where we used in the second step that $Y$ has full rank. 
    Hence, we may conclude $\ker \bar{\nu}^1 \cap \dots \cap \ker \bar{\nu}^M = Z$ and thus $H = V \oplus Z$ follows from \theoref{theo:direct_sum_finite_dual}. 
    
    Finally, recall $C_V = P_V \left. K \right|_V$, where $P_V$ is the projection on $V$ along $Z$.    
    To construct $K_B$, we must compute for each $j \in \{1, \dots, M\}$ the components $a_j^k$ of
    \begin{equation*}
        C_V \psi_j = a_j^k \psi_k \in V
    \end{equation*}
    relative to $B$. 
    The proof of \theoref{theo:direct_sum_finite_dual} implies that they are given by 
    \begin{equation*}
        a_j^k = P_V K \psi_j = \bar{\nu}^k(K \psi_j) = y_i^k \bar{\sigma}^i(K \psi_j) .
    \end{equation*}
    Then we have by definition of $K_B$ and our previous result $Y = H_B^{-1}$ that
    \begin{equation*}
        K_B = \Matrix{
            a_1^1 & \dots & a_M^1 \\
            \vdots & \ddots & \vdots \\
            a_1^M & \dots & a_M^M
        } = 
        \Matrix{
            y_1^1 & \dots & y_M^1 \\
            \vdots & \ddots & \vdots \\
            y_1^M & \dots & y_M^M
        } \Matrix{
            \bar{\sigma}^1(K \psi_1) & \dots & \bar{\sigma}^1(K \psi_M) \\
            \vdots & \ddots & \vdots \\
            \bar{\sigma}^M(K \psi_1) & \dots & \bar{\sigma}^M(K \psi_M)
        } = H_B^{-1} A_B ,
    \end{equation*}
    as claimed.
\end{proof}
Now, fix a subspace $W$ of $V$ of dimension $\ell$.
The above result allows us to express a Koopman compression on $W \subset V$ in terms of a Koopman compression on $V$. 
We represent $W$ by selecting a basis $\tilde{B} = (\omega_1, \dots, \omega_\ell)$ for $W$. 
Because $B$ is a basis for $V$ and $W \subseteq V$, we can express the basis vectors of $W$ as a linear combination of the basis vectors of $V$. 
Specifically, we assume
\begin{equation*}
    \forall j \in \{1, \dots, \ell \} \colon \omega_j = \bar{u}_j^k \psi_k ,
\end{equation*}
and collect the components in the matrix
\begin{equation} \label{eq:basis_reduced}
    \bar{U} = \Matrix{
        \bar{u}_1^1 & \dots & \bar{u}_\ell^1 \\
        \vdots & \ddots & \vdots \\
        \bar{u}_1^M & \dots & \bar{u}_\ell^M
    } \in \rmspace{M}{\ell} ,
\end{equation}
which has full column rank.
\begin{corollary} \label{coro:koopman_compression_sub}
    Let the assumptions of \theoref{theo:koopman_compression} hold. 
    In addition, assume that $W$ is a subspace of $V$, and that $\tilde{B} = (\omega_1, \dots, \omega_\ell)$ is a basis for $W$. 
    Suppose that the relation between $\tilde{B}$ and $B$ is described by the matrix $\bar{U}$ in \eqref{eq:basis_reduced}. 
    For each $j \in \{1, \dots, \ell\}$, introduce $\bar{\tau}^j \coloneqq b(\cdot, \omega_j) \colon H \to \mathbb{R}$, and consider the subspace $F = \ker \bar{\tau}^1 \cap \dots \cap \ker \bar{\tau}^m$.

    Then, the matrix representation $K_{\tilde{B}}$ of the compression $C_W$ of $K$ on $W$ along $F$ is 
    \begin{equation*}
        K_{\tilde{B}} = \left( \bar{U}^\top H_B \bar{U} \right)^{-1} \bar{U}^\top A_B \bar{U} .
    \end{equation*}
\end{corollary}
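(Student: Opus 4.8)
The plan is to apply \theoref{theo:koopman_compression} again, with the pair $(V,B)$ replaced by the pair $(W,\tilde{B})$ and with the \emph{same} bilinear form $b$. The single hypothesis of that theorem which is not automatically inherited is that $b$ be non-degenerate on $W$; non-degeneracy on $V$ need not descend to an arbitrary subspace. However, this is precisely the condition under which the compression $C_W$ of $K$ on $W$ along $F$ is meaningful: rerunning the second step of the proof of \theoref{theo:koopman_compression} with $W$ in place of $V$, invertibility of the matrix $H_{\tilde{B}} = [\,b(\omega_i,\omega_j)\,]_{j,i}$ is exactly what is needed to conclude $H = W \oplus F$. I would therefore treat this as part of the hypothesis (it is automatic in the EDMD instantiation of \secref{sec:subspace_edmd}, where $b$ produces a positive definite $H_B$, so that $\bar{U}^\top H_B \bar{U}$ is positive definite because $\bar{U}$ has full column rank), and the statement becomes a direct corollary.

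The one computation required is to express the matrices $H_{\tilde{B}}$ and $A_{\tilde{B}}$ attached to $(W,\tilde{B},b)$ in terms of $H_B$, $A_B$ and $\bar{U}$. Starting from $\omega_j = \bar{u}_j^k \psi_k$ and using bilinearity of $b$ — and, for $A$, additionally linearity of $K$ — one obtains entrywise
\begin{align*}
    b(\omega_i,\omega_j) &= \bar{u}_i^l\,\bar{u}_j^k\, b(\psi_l,\psi_k), &
    b(K\omega_i,\omega_j) &= \bar{u}_i^l\,\bar{u}_j^k\, b(K\psi_l,\psi_k),
\end{align*}
which in matrix form, with the index convention of \eqref{eq:matrices_bilinear}, reads precisely $H_{\tilde{B}} = \bar{U}^\top H_B \bar{U}$ and $A_{\tilde{B}} = \bar{U}^\top A_B \bar{U}$.

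Finally, note that in \theoref{theo:koopman_compression} applied to $(W,\tilde{B})$ the functionals $\bar{\tau}^j \coloneqq b(\cdot,\omega_j)$ play the role of the $\bar{\sigma}^j$, so the subspace denoted $Z$ there is exactly $F = \ker\bar{\tau}^1 \cap \dots \cap \ker\bar{\tau}^\ell$. The theorem then gives $K_{\tilde{B}} = H_{\tilde{B}}^{-1} A_{\tilde{B}} = (\bar{U}^\top H_B \bar{U})^{-1}\,\bar{U}^\top A_B \bar{U}$, as claimed. I expect the only genuinely delicate point to be the non-degeneracy of $b$ on $W$ (equivalently, invertibility of $\bar{U}^\top H_B \bar{U}$) addressed in the first paragraph; everything else is index bookkeeping and a single appeal to \theoref{theo:koopman_compression}.
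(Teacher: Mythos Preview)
Your approach is exactly the paper's: show $H_{\tilde{B}}=\bar{U}^\top H_B\bar{U}$ and $A_{\tilde{B}}=\bar{U}^\top A_B\bar{U}$ by bilinearity (and linearity of $K$), then invoke \theoref{theo:koopman_compression} with $(W,\tilde{B})$ in place of $(V,B)$, so that the $\bar{\tau}^j$ play the role of the $\bar{\sigma}^j$ and $F$ replaces $Z$.

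Your caution on the non-degeneracy hypothesis is in fact better than the paper's treatment. The paper simply asserts that non-degeneracy of $b$ on $V$ together with $W\subseteq V$ implies non-degeneracy on $W$; but for a general bilinear form this is false (e.g., $b(x,y)=x_1y_2+x_2y_1$ on $\mathbb{R}^2$ is non-degenerate yet vanishes identically on $\mathrm{span}(e_1)$). Your observation that in the EDMD setting $H_B=G_BG_B^\top$ is symmetric positive definite, so $\bar{U}^\top H_B\bar{U}$ is automatically invertible for any full-column-rank $\bar{U}$, is precisely the argument that closes this gap in the intended application.
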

\begin{proof}
    First, note that the assumption that $b$ is non-degenerate on $V$ together with $W \subseteq V$ implies that $b$ is non-degenerate on $W$.
    Following this, the key part of the proof is to establish that the matrices
    \begin{align*}
        H_{\tilde{B}} &= \Matrix{
            b(\omega_1,\omega_1) & \dots & b(\omega_\ell,\omega_1) \\
            \vdots & \ddots & \vdots \\
            b(\omega_1,\omega_\ell) & \dots & b(\omega_\ell,\omega_\ell)  
        } , & 
        A_{\tilde{B}} &= \Matrix{
            b(K \omega_1,\omega_1) & \dots & b(K \omega_\ell,\omega_1) \\
            \vdots & \ddots & \vdots \\
            b(K \omega_1,\omega_\ell) & \dots & b(K \omega_\ell,\omega_\ell)  
        } ,
    \end{align*}
    satisfy $H_{\tilde{B}} = \bar{U}^\top H_B \bar{U}$ and $A_{\tilde{B}} = \bar{U}^\top A_B \bar{U}$. 
    Then, the assertion follows from \theoref{theo:koopman_compression}.

    Note that for each $j,k \in \{1, \dots, \ell\}$ by definition of $\tilde{B}$ we have
    \begin{equation*}
        b(\omega_j, \omega_k) = \bar{u}_j^i \bar{u}_k^a b(\psi_i, \psi_a),
    \end{equation*}
    and by linearity of $K$ we also have
    \begin{equation*}
        b(K \omega_j, \omega_k) = \bar{u}_k^a b(\bar{u}_j^i K \psi_i, \psi_a) = \bar{u}_j^i \bar{u}_k^a b(K \psi_i, \psi_a)
    \end{equation*}
    Putting this in matrix form yields the claimed relation.
\end{proof}
This result enables us to directly construct from the data $H_B, A_B$ -- which defines a Koopman linear system on $V$ -- a Koopman linear system on a subspace $W \subset V$ and evaluate its invariance. 

\subsection*{Based on Data}

Now, assume that the training data $D$ in \eqref{eq:edmd_training_data} has been collected. 
By using this data, we are equipped to define the bilinear form 
\begin{equation} \label{eq:bilinear_form_data}
    b \colon \begin{cases*}
        H \times H \to \mathbb{R} \\
        (\psi,\varphi) \mapsto \sum_{i=1}^L \psi(x_i) \cdot \varphi(x_i)
    \end{cases*}
\end{equation}
Note that the training data $D$ enables us to evaluate for each $j,k \in \{1, \dots, M\}$ the term $b(K \psi_j, \psi_k)$, since
\begin{equation} \label{eq:bilinear_form_data_koopman}
    b(K \psi_j, \psi_k) = \sum_{i=1}^L (K \psi_j)(x_i) \cdot \psi_k(x_i) = \sum_{i=1}^L (\psi_j \circ f)(x_i) \cdot \psi_k(x_i) = \sum_{i=1}^L \psi_j(y_i) \cdot \psi_k(x_i) 
\end{equation}
holds by definition of the Koopman operator. 
This is illustrated in \figref{fig:edmd_training_data}.

\begin{figure}
    \centering
    \begin{tikzpicture}[>=latex, x=2.75cm, y=1.25cm]
        \begin{scope}[xshift=-3.5cm]
            \draw [] (-1,-1) rectangle ++(2,2);
            \node at(0.95,0.85) (X) {\footnotesize $X$};
            
            % first pair
            \node at (-0.9,-0.8)  (x1) [circle, draw, inner sep=1pt, fill=white] {};
            \node at (-0.1,-0.4)  (y1) [circle, draw, inner sep=1pt, fill=white] {};
            \draw[->, dashed] (x1) to[out=45, in=165] node[below] {\footnotesize $f$} (y1);
            \node at ([xshift=0.75em]x1) [] {\footnotesize $x_1$};
            \node at ([yshift=-0.6em]y1.south) [] {\footnotesize $y_1$};
            
            % i-th pair
            \node at (0.1,0.3)  (xi) [circle, draw, inner sep=1pt, fill=white] {};
            \node at (0.7,-0.3)  (yi) [circle, draw, inner sep=1pt, fill=white] {};
            \draw[->, dashed] (xi) to[out=0,in=180] node[above, pos=0.7] {\footnotesize $f$} (yi);
            \node at ([xshift=-0.75em]xi) [] {\footnotesize $x_i$};
            \node at ([yshift=-0.6em]yi.south) [] {\footnotesize $y_i$};
            
            % L-th pair
            \node at (-0.8,0.4)  (xM) [circle, draw, inner sep=1pt, fill=white] {};
            \node at (-0.3,0.8)  (yM) [circle, draw, inner sep=1pt, fill=white] {};
            \draw[->, dashed] (xM) to[out=0,in=180] node[above] {\footnotesize $f$} (yM);
            \node at ([xshift=-0.75em]xM) [] {\footnotesize $x_L$};
            \node at ([yshift=-0.6em]yM.south) [] {\footnotesize $y_L$};
            
            % data set
%						\node at (0.4,0.8) (D) [mygreen] {\footnotesize $D = \{ (x_i, y_i) \}_{i=1}^M$};
        \end{scope}
        \begin{scope}[xshift=3.5cm]
            \draw[->] (-1,0) -- ++(2,0) node [below] {\footnotesize $\mathbb{R}$};
            
            %\node at (0.7,0)  (psiy) [] {$|$};
            \draw (0.7,2pt) -- node[name=psiy] {} (0.7,-2pt);
            \draw (-0.6,2pt) -- node[name=Kpsix] {} (-0.6,-2pt);
        \end{scope}
        \draw[->, myblue, dash dot dot] (yi.east) to[bend right] node[pos=0.5, above] {\footnotesize $\psi_j$} (psiy);
        \draw[->, mygreen, dash dot dot] (xi.east) to[bend left] node[pos=0.5, below] {\footnotesize $K \psi_j$} (psiy);
        \draw[->, myred, dash dot dot] (xi.east) to[out=0,in=145] node[pos=0.6, below] {\footnotesize $C_{V} \psi_j$} (Kpsix);
    \end{tikzpicture}
    \caption{Training data for the EDMD algorithm}
    \label{fig:edmd_training_data}
\end{figure}

An important object related to the bilinear form in \theoref{theo:koopman_compression} for expressing a Koopman compression were the data matrices \eqref{eq:matrices_bilinear} relative to a basis $B$. 
In this case, they are related to the data matrices $G_B, S_B$ defined in \eqref{eq:edmd_matrices}.
For convenience, we state the following direct consequence of the definition of $b$ in \eqref{eq:bilinear_form_data}.
\begin{corollary} \label{coro:matrices_bilinear_data}
    Let $V$ be a finite-dimensional subspace of $H$, and suppose that $B = (\psi_1, \dots, \psi_m)$ is a basis for $V$. 
    Let $G_B, S_B$ denote the data matrices in \eqref{eq:edmd_matrices} relative to $B$.
    
    Then, the matrices $H_B$ and $A_B$ defined in \eqref{eq:matrices_bilinear} relative the bilinear form in \eqref{eq:bilinear_form_data} are given by $H_B = G_B G_B^\top$ and $A_B = G_B S_B^\top$.
\end{corollary}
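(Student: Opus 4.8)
The plan is to verify both claimed identities by comparing the two matrices entrywise, since everything in sight is defined coordinatewise and the assertion is purely computational.

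First I would unpack the definitions. From the data-defined bilinear form \eqref{eq:bilinear_form_data} we have $b(\psi,\varphi)=\sum_{i=1}^L \psi(x_i)\varphi(x_i)$, so the $(k,j)$-entry of $H_B$ as defined in \eqref{eq:matrices_bilinear} is $b(\psi_j,\psi_k)=\sum_{i=1}^L \psi_j(x_i)\psi_k(x_i)$. On the other hand, the data matrix $G_B$ from \eqref{eq:edmd_matrices} has $(j,i)$-entry $\psi_j(x_i)$, so by the definition of matrix multiplication the $(k,j)$-entry of $G_B G_B^\top$ is $\sum_{i=1}^L \psi_k(x_i)\psi_j(x_i)$. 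These agree, giving $H_B=G_B G_B^\top$.

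For the second identity I would invoke \eqref{eq:bilinear_form_data_koopman}, which already records, as a consequence of the definition of the Koopman operator and $y_i=f(x_i)$, that $b(K\psi_j,\psi_k)=\sum_{i=1}^L \psi_j(y_i)\psi_k(x_i)$. Hence the $(k,j)$-entry of $A_B$ from \eqref{eq:matrices_bilinear} equals $\sum_{i=1}^L \psi_j(y_i)\psi_k(x_i)$. Since $S_B$ from \eqref{eq:edmd_matrices} has $(j,i)$-entry $\psi_j(y_i)$, the $(k,j)$-entry of $G_B S_B^\top$ is $\sum_{i=1}^L \psi_k(x_i)\psi_j(y_i)$, which matches $A_B$; therefore $A_B=G_B S_B^\top$.

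There is no substantive obstacle; the only point requiring care is keeping the transpose and row/column conventions consistent between \eqref{eq:matrices_bilinear} and \eqref{eq:edmd_matrices}. In particular, in the definitions of $H_B$ and $A_B$ the arguments of $b$ are ordered so that the column index appears first and the row index second, which is precisely what produces the outer-product structure $G_B(\,\cdot\,)^\top$ on the right-hand side, so one should spell out the index bookkeeping rather than merely asserting the equality.
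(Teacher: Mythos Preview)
Your proposal is correct and is precisely the spelled-out version of the paper's one-line proof, which simply states that the identities follow directly from the definitions of $H_B$, $A_B$ and from \eqref{eq:bilinear_form_data_koopman}. Your careful entrywise bookkeeping is exactly what underlies that sentence, so the approaches coincide.
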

\begin{proof}
    This follows directly from the definitions of $H_B, A_B$ and by considering \eqref{eq:bilinear_form_data_koopman}.
\end{proof}
To build a Koopman linear system with these ingredients, we have to ensure that the bilinear form
$b$ derived from the training data is non-degenerate on $V$. 
To this end, we must make an additional assumption on the training data $D$ and the basis $B$, which we formalise below.
\begin{lemma} \label{lemma:non_degenerate_data}
    Let the assumptions of \cororef{coro:matrices_bilinear_data} hold. 
    Then, the following statements are equivalent.
    \begin{enumerate}
        \item[(i)] The data matrix $G_B$ has full row rank $m$.
        \item[(ii)] The bilinear form $b$ defined in \eqref{eq:bilinear_form_data} is non-degenerate on $V$. 
    \end{enumerate}
\end{lemma}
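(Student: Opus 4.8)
The plan is to reduce the whole equivalence to the elementary fact that, for a real matrix $G$, the symmetric matrix $GG^\top$ is invertible precisely when $G$ has full row rank. First I would unwind the definition of non-degeneracy: by construction, $b$ is non-degenerate on $V$ if and only if the linear map $b^\prime \colon V \to V^\ast$, $\psi \mapsto b(\cdot,\psi)$, is one-to-one. Since $\dim V = \dim V^\ast < \infty$, injectivity of $b^\prime$ is equivalent to $b^\prime$ being an isomorphism (\cf \cororef{coro:bilinear_iso}), which in turn holds if and only if the matrix representation of $b^\prime$ relative to the basis $B$ of $V$ and the dual basis $B^\ast$ of $V^\ast$ is invertible. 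That matrix is exactly $H_B$ from \eqref{eq:matrices_bilinear}: its $(k,j)$-entry is $b(\psi_j,\psi_k) = \bar{\sigma}^k(\psi_j)$, i.e.\ the $k$-th dual coordinate of $b^\prime(\psi_j)$. Hence statement (ii) is equivalent to $H_B \in \gl{M}$.

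Next I would invoke \cororef{coro:matrices_bilinear_data}, which, for the data-defined bilinear form \eqref{eq:bilinear_form_data}, gives $H_B = G_B G_B^\top$. It therefore remains to show that $G_B G_B^\top$ is invertible if and only if $G_B$ has full row rank. The key observation is $\ker(G_B G_B^\top) = \ker(G_B^\top)$: the inclusion ``$\supseteq$'' is immediate, and for ``$\subseteq$'', if $G_B G_B^\top v = 0$ then $0 = v^\top G_B G_B^\top v = \| G_B^\top v \|_2^2$, so $G_B^\top v = 0$. Consequently $G_B G_B^\top$ is invertible if and only if $G_B^\top$ has trivial kernel, i.e.\ if and only if $G_B^\top$ has full column rank, which is the same as $G_B$ having full row rank. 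Chaining the equivalences (i) $\Leftrightarrow$ $G_B G_B^\top \in \gl{M}$ $\Leftrightarrow$ $H_B \in \gl{M}$ $\Leftrightarrow$ (ii) completes the proof.

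I do not anticipate a genuine obstacle here. The only points that need a little care are the bookkeeping that identifies the map $b^\prime$ with the matrix $H_B$ — matching the transpose and dual-basis conventions already fixed in \appref{app:computing_compression} — and the explicit appeal to finite-dimensionality of $V$ when passing from injectivity of $b^\prime$ to invertibility of its matrix representation.
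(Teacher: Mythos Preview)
Your proof is correct and follows essentially the same route as the paper: both reduce non-degeneracy of $b$ on $V$ to invertibility of $H_B$, invoke \cororef{coro:matrices_bilinear_data} to write $H_B = G_B G_B^\top$, and finish with the rank identity $\operatorname{rank}(G_B) = \operatorname{rank}(G_B G_B^\top)$. The paper additionally includes a short preliminary check that condition~(i) is basis-independent, but the core argument is the same as yours.
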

\begin{proof}
    As $(ii)$ is a property of the subspace $V$ and not of a specific basis, we first assure ourselves that $(i)$ implies that the data matrix has full row rank in \emph{any} basis. 
    Suppose that $E = (\varphi_1, \dots, \varphi_M)$ is another basis for $V$, and that $P \in \gl{M}$ is the change-of-basis matrix from $B$ to $E$, \cf \eqref{eq:change_of_basis_matrix}. 
    Then, we have
    \begin{align*}
        {G}_E &= \Matrix{
            \varphi_1({x}_1) & \dots & \varphi_1({x}_L) \\
            \vdots & \ddots & \vdots \\
            \varphi_M({x}_1) & \dots & \varphi_M({x}_L)
        } = \Matrix{
            p_1^k \psi_k({x}_1) & \dots & p_1^k \psi_k({x}_L) \\
            \vdots & \ddots & \vdots \\
            p_M^k \psi_k({x}_1) & \dots & p_M^k \psi_k({x}_L)
        } \\
        &= \Matrix{
            p_1^1 & \dots & p_1^M \\
            \vdots & \ddots & \vdots \\
            p_M^1 & \dots & p_M^M
        } \Matrix{
            \psi_1({x}_1) & \dots & \psi_1({x}_L) \\
            \vdots & \ddots & \vdots \\
            \psi_M({x}_1) & \dots & \psi_M({x}_L)
        } = P G_B .
    \end{align*}
    Since $P$ has full rank, $G_E$ has full row rank if and only if $G_B$ has full row rank, as desired.

    We now prove the claimed equivalence.
    Recall that $b$ is non-degenerate on $V$ if $$b^\prime \colon V \ni \psi \mapsto b(\cdot,\psi) \in V^\ast$$ is one-to-one. 
    Since $b^\prime$ is linear, this is equivalent to $\ker b^\prime = \{ 0 \}$.

    Assume that $\psi = q^k \psi_k \in \ker b^\prime$.
    This means that $\nu \coloneqq b^\prime(\psi) \in V^\ast$ maps \emph{any} vector in $V$ to zero, which in particular implies
    \begin{equation*}
        \forall j \in \{1, \dots, M\} \colon 0 = \nu(\psi_j) = b(\psi_j, \psi) = q^k b(\psi_j, \psi_k) .
    \end{equation*}
    Denoting $q = (q^1, \dots, q^M) \in \mathbb{R}^M$ and collecting the above conditions yields
    \begin{align} \label{eq:kernel_bilinear_data}
        \Matrix{0 \\ \vdots \\ 0} = \Matrix{
            q^k b(\psi_1, \psi_k) \\ \vdots \\ q^k b(\psi_M, \psi_k)
        } &= \Matrix{
            b(\psi_1, \psi_1) & \dots & b(\psi_1, \psi_M) \\
            \vdots & \ddots & \vdots \\
            b(\psi_M, \psi_1) & \dots & b(\psi_M, \psi_M)
        } \Matrix{q^1 \\ \vdots \\ q^M} &
        &\iff &
        H_B q &= 0 .
    \end{align}
    First, assume that $b$ is non-degenerate on $V$. 
    This means that the only solution to \eqref{eq:kernel_bilinear_data} is $q = 0$, which implies $H_B \in \gl{M}$.
    Using \cororef{coro:matrices_bilinear_data}, we have $H_B = G_B G_B^\top$.
    Since the rank of $G_B$ and ${G}_B {G}_B^\top$ coincide, this implies that $G_B$ has full row rank.

    The reverse direction uses a similar argument. 
    Assume that $G_B$ has full row rank. 
    Using \cororef{coro:matrices_bilinear_data} again, this means that $H_B = G_B G_B^\top$ is invertible, so that the only solution to \eqref{eq:kernel_bilinear_data} is $q=0$. 
    Hence, $\psi \in \ker b^\prime$ must be the zero vector, which implies that $b$ is non-degenerate on $V$.
\end{proof}

Hence, assuming that $G_B$ defined in \eqref{eq:edmd_matrices} does have full row rank, we can directly apply \theoref{theo:koopman_compression}, where the bilinear form $b$ is given by \eqref{eq:bilinear_form_data}, to obtain a subspace $Z \subset H$ and the compression $C_V$ of $K$ on $V$ along $Z$, which is represented relative to $B$ by the 
\begin{equation*}
    K_B = (G_B G_B^\top)^{-1} G_B S_B^\top .
\end{equation*}
We refer to this specific compression of $K$ as the \emph{EDMD compression on $V$}, so that the subspace $Z$ that we project along is always implicitly understood to be the one defined in \theoref{theo:koopman_compression}.

\begin{remark}
    The EDMD compression $C_V$ on $V$ has the alternative interpretation of being a minimiser to the regression problem
    \begin{argmini*}
        {A \in \End{V}}{\sum_{i=1}^L \sum_{j=1}^m (\psi_j(y_i) - (A \psi_j)(x_i))^2}
        {}{}
    \end{argmini*}
    It can be shown that $C_V$ converges almost surely to the compression $C_V^\star$ obtained by the $L^2$-orthogonal projection on $V$ for $L \to \infty$ by using the law of large numbers \cite{Korda2018}. 

    This means that for each $\zeta \in V$ we have for the $L^2$-orthogonal projection the orthogonality property
    \begin{equation*}
        \forall \psi \in V \colon \iprod{K \zeta - C_V^\star \zeta}{\psi} = 0, 
    \end{equation*}
    while in our formulation we have the weaker property 
    \begin{equation*}
        \forall \psi \in V \colon b(K \zeta - C_V \zeta, \psi) = 0,
    \end{equation*}
    which means that we do not project orthogonally with respect to the inner product on $H$, but with respect to the \enquote{orthogonality} notion induced by the data-enabled bilinear form $b$.
\end{remark}
Subsequently, we may use \cororef{coro:koopman_compression_sub} to represent the EDMD compression $C_W$ on a subspace $W \subseteq V$.

\section{Projections}\label{app:projections}

For this work to be self-contained, we include some basic facts about projections on vector spaces. 

Let $V$ be a vector space over the field $\mathbb{F}$.
Suppose that $W$ and $S$ are subspaces of $V$. 
We define their \emph{sum} $W+S$ as 
\begin{equation*}
    W + S = \{ 
        w + s \mid w \in W \land s \in S
    \} \subseteq V,
\end{equation*}
which is, again, a subspace of $V$.

We say that $V$ is the \emph{direct sum} of $W$ and $S$ if for every vector $v$ in $V$ there exist \emph{unique} vectors $w \in W$ and $s \in S$ such that $v = s+w$.
Then, we write $V = W \oplus S$.

\begin{theorem} \label{theo:direct_sum}
    Let $V$ be a vector space, and let $W, S$ be subspaces of $V$.

     Then, $V$ is the direct sum of $W$ and $S$ if and only if $W + S = V$ and $W \cap S = \{ 0 \}$.
\end{theorem}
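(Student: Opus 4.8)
The plan is to prove the two implications of the biconditional separately, using only the definition of direct sum (existence \emph{and} uniqueness of the decomposition $v = w + s$) together with the fact that $W$ and $S$ are subspaces, hence closed under addition and negation.

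First I would treat the forward direction. Assume $V = W \oplus S$. The existence half of the definition immediately gives that every $v \in V$ can be written as $v = w + s$ with $w \in W$, $s \in S$, so $W + S = V$. For the intersection, take any $v \in W \cap S$. Then $v = v + 0$ with $v \in W$, $0 \in S$, and also $v = 0 + v$ with $0 \in W$, $v \in S$; these are two decompositions of $v$ into a $W$-part and an $S$-part. By the uniqueness clause of the direct-sum definition, the two decompositions must agree, forcing $v = 0$. Hence $W \cap S = \{0\}$.

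Next I would treat the reverse direction. Assume $W + S = V$ and $W \cap S = \{0\}$. Existence of a decomposition for every $v$ is exactly the hypothesis $W + S = V$. For uniqueness, suppose $v = w_1 + s_1 = w_2 + s_2$ with $w_1, w_2 \in W$ and $s_1, s_2 \in S$. Rearranging gives $w_1 - w_2 = s_2 - s_1$. The left-hand side lies in $W$ and the right-hand side lies in $S$ (each being a subspace), so this common vector lies in $W \cap S = \{0\}$; thus $w_1 - w_2 = 0$ and $s_2 - s_1 = 0$, i.e.\ $w_1 = w_2$ and $s_1 = s_2$. This establishes uniqueness, so $V = W \oplus S$.

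I do not anticipate a genuine obstacle here: the result is elementary and the only point requiring care is to invoke the subspace property of $W$ and $S$ precisely where differences of their elements are claimed to lie back inside them, and to keep the existence and uniqueness halves of the direct-sum definition clearly separated throughout.
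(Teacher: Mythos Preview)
Your proposal is correct and essentially identical to the paper's own proof: both directions are argued exactly as you describe, via the two-decompositions trick for $W \cap S = \{0\}$ and the difference $w_1 - w_2 = s_2 - s_1 \in W \cap S$ for uniqueness. The only cosmetic difference is that the paper proves the reverse implication first and, in the forward direction, explicitly notes the trivial containment $W + S \subseteq V$ before concluding equality.
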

\begin{proof}
    First, suppose that $W + S = V$ and $W \cap S = \{ 0 \}$.
    Let $v \in V$ be arbitrary. 
    The assumption $V = W + S$ implies the existence of $w \in W$ and $s \in S$ such that $v = w + s$. 
    To establish that $V$ is the direct sum of $W$ and $S$, we have to prove that these elements $w, s$ are uniquely determined. 
    
    Assume that $\bar{w} \in W$ and $\bar{s} \in S$ are chosen such that $v = \bar{w} + \bar{s}$. 
    Then, we have
    \begin{align*}
        w + s &= \bar{w} + \bar{s} &
        &\iff &
        w - \bar{w} = \bar{s} - s
    \end{align*}
    By linearity of both $W$ and $S$, we have $w - \bar{w} \in W$ and $s - \bar{s} \in S$. 
    By assumption, $W \cap S = \{ 0 \}$ holds, from which we conclude
    \begin{equation*}
        w - \bar{w} = 0 \land \bar{s} - s = 0 .
    \end{equation*}
    Hence, $w = \bar{w}$ and $s = \bar{s}$, as claimed.

    Conversely, suppose that $V$ is the direct sum of $W$ and $S$. 
    We first establish $V = W + S$
    Let $v \in V$ be arbitrary. 
    By assumption, there exist unique vectors $w$ in $W$ and $s$ in $S$ such that $v = w + s$. 
    This shows $V \subseteq W + S$.  
    From linearity of $V$ and because $W$ and $S$ are subspaces of $V$, we have $W + S \subseteq V$. 
    Combining both gives $V = W + S$. 

    Finally, we prove $W \cap S = \{0\}$. 
    Suppose that $v$ lies in $W \cap S$. 
    Then, we have $v = v + 0$, with $v \in W$ and $0 \in S$. 
    At the same time, we have $v = 0 + v$, with $0 \in W$ and $v \in S$. 
    Because the expression for $v$ as the sum of a vector in $W$ and a vector in $S$ is unique, we conclude $v=0$.
\end{proof}
\begin{theorem} \label{theo:direct_sum_finite_dual}
    Let $V$ be a vector space, and let $W$ be a finite-dimensional subspace of $V$. 
    Suppose that $(w_1, \dots, w_m)$ is a basis for $W$, with corresponding dual basis $(\varphi^1, \dots, \varphi^m)$. 
    Assume that for each $j \in \{1, \dots, m\}$, there exists a linear functional $\bar{\varphi}^j \colon V \to \mathbb{R}$ that is an extension of $\varphi^j$ to all of $V$. 

    Then, $V$ is the direct sum of $W$ and $S = \ker \bar{\varphi}^1 \cap \dots \cap \ker \bar{\varphi}^m$.
\end{theorem}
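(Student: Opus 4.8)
The plan is to reduce the claim to \theoref{theo:direct_sum} by verifying the two conditions $W + S = V$ and $W \cap S = \{0\}$; as a byproduct the argument will also pin down the projection onto $W$ along $S$, which is the fact the proof of \theoref{theo:koopman_compression} refers back to. Throughout I would write $\bar{\varphi}^j$ for the given extension of the $j$-th dual-basis functional $\varphi^j$, so that $\bar{\varphi}^j(w_k) = \varphi^j(w_k) = \delta^j_k$ for all $j,k \in \{1,\dots,m\}$.

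First I would dispose of $W \cap S = \{0\}$: for $w \in W \cap S$, expand $w = a^k w_k$ (sum over $k \in \{1,\dots,m\}$) in the basis of $W$. Applying $\bar{\varphi}^j$ and using the dual-basis relation gives $\bar{\varphi}^j(w) = a^j$ for each $j$; but membership in $S$ forces $\bar{\varphi}^j(w) = 0$ for every $j$, so all $a^j$ vanish and $w = 0$. Next, for $W + S = V$, I would take an arbitrary $v \in V$ and set $w \coloneqq \bar{\varphi}^j(v)\, w_j \in W$ and $s \coloneqq v - w$. A one-line computation, $\bar{\varphi}^k(s) = \bar{\varphi}^k(v) - \bar{\varphi}^j(v)\,\bar{\varphi}^k(w_j) = \bar{\varphi}^k(v) - \bar{\varphi}^j(v)\,\delta^k_j = 0$, shows $s \in S$, hence $v = w + s \in W + S$; the reverse inclusion $W + S \subseteq V$ is immediate. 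With both conditions in hand, \theoref{theo:direct_sum} yields $V = W \oplus S$. The same computation identifies the projection $P_W \colon V \to W$ along $S$ as $v \mapsto \bar{\varphi}^j(v)\, w_j$, i.e. the $j$-th coordinate of $P_W v$ relative to $(w_1,\dots,w_m)$ equals $\bar{\varphi}^j(v)$; this is precisely the formula for the coefficients $a_j^k$ invoked later in \theoref{theo:koopman_compression}.

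I do not anticipate a genuine obstacle: this is the textbook ``extend a basis of $W^\ast$ to all of $V$'' construction of a complementary subspace. The only point demanding attention is keeping the extension/restriction relationship straight — one must only ever apply $\bar{\varphi}^j$ to elements of $W$ (in particular to the $w_j$), where its value is controlled by $\varphi^j(w_k) = \delta^j_k$; outside $W$ no further structure is needed. Everything else is routine bookkeeping with the summation convention.
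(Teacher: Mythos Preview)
Your proposal is correct and follows essentially the same route as the paper's proof: both verify $W \cap S = \{0\}$ and $W + S = V$ via the explicit splitting $v = \bar{\varphi}^j(v)\,w_j + \bigl(v - \bar{\varphi}^j(v)\,w_j\bigr)$ and then invoke \theoref{theo:direct_sum}; the only cosmetic difference is the order in which the two conditions are checked. Your added remark identifying the projection $P_W \colon v \mapsto \bar{\varphi}^j(v)\,w_j$ is exactly the fact the paper later extracts from this proof in \theoref{theo:koopman_compression}.
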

\begin{proof}
    To prove the theorem, we rely on \theoref{theo:direct_sum}.

    We first establish $V = W + S$. 
    Since $W$ and $S$ are both subspaces of $V$, we surely have $W+S \subseteq V$.
    For the reverse direction, let $v \in V$ be arbitrary. 
    For each $j \in \{1, \dots, m\}$, set $p^j = \bar{\varphi}^j(v) \in \mathbb{R}$ and define $w \coloneqq p^j w_j \in W$. 
    Then, we have for each $j \in \{1, \dots, m\}$ that
    \begin{align*}
        \bar{\varphi}^j(v - w) &= \bar{\varphi}^j(v) - \bar{\varphi}^j(w)
        = \bar{\varphi}^j(v) - \varphi^j(w) 
        = \bar{\varphi}^j(v) - p^k \varphi^j(w_k)\
        = \bar{\varphi}^j(v) - p^k \delta_k^j \\
        &= \bar{\varphi}^j(v) - p^j
        = p^j - p^j 
        = 0 
    \end{align*}
    holds. 
    Thus, we have $v - w \in S$, and then $v = w + (v-w)$ implies $V \subseteq W + S$, so that we can conclude $V = W + S$.

    Next, we show $W \cap S = \{ 0 \}$. 
    Suppose that $v$ lies in $W \cap S$. 
    From $v \in W$ follows the existence of real numbers $p^j$ such that $v = p^j w_j$. 
    Additionally, since $v \in S$, we have for each $j \in \{1, \dots, m \}$ that $\bar{\varphi}^j(v) = 0$. 
    Combining this yields 
    \begin{equation*}
        \forall j \in \{ 1, \dots, m\} \colon 0 = \bar{\varphi}^j(v) = p^k \bar{\varphi}^j(w_k) 
        = p^k \varphi^j(w_k) 
        = p^k \delta_j^k 
        = p^j,
    \end{equation*}
    and hence $v = p^j w_j$ is the zero vector, as claimed.

    Finally, the assertion follows from \theoref{theo:direct_sum}.
\end{proof}

A \emph{projection} of $V$ is a linear map $P$ on $V$ such that $P \circ P = P$.

\begin{theorem} \label{theo:projection_direct_sum}
    Let $V$ be a vector space, and let $W$ and $S$ be subspaces of $V$ such that $V$ is the direct sum of $W$ and $S$. 
    Define the linear map $P$ on $V$ that sends each vector $v = w + s$ in $V$ to its unique component $w$ in $W$ relative to $S$.
    
    Then, $P$ is a projection of $V$ with $\ran P = W$ and $\ker P = S$, called the projection on $W$ along $S$.
\end{theorem}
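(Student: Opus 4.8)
The plan is to verify each clause of the statement in turn, with everything resting on the uniqueness of the decomposition guaranteed by $V = W \oplus S$, i.e.\ \theoref{theo:direct_sum}. First I would check that $P$ is well-defined: for $v \in V$, the representation $v = w + s$ with $w \in W$ and $s \in S$ exists and is \emph{unique} by the definition of a direct sum, so the assignment $Pv \coloneqq w$ is unambiguous. Linearity then follows by taking $v_1 = w_1 + s_1$ and $v_2 = w_2 + s_2$ and observing that $\lambda v_1 + \mu v_2 = (\lambda w_1 + \mu w_2) + (\lambda s_1 + \mu s_2)$ splits $\lambda v_1 + \mu v_2$ into a $W$-part and an $S$-part; by uniqueness this \emph{is} its decomposition, so $P(\lambda v_1 + \mu v_2) = \lambda w_1 + \mu w_2 = \lambda P v_1 + \mu P v_2$.

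Next I would determine the range. The inclusion $\ran P \subseteq W$ is immediate from the definition of $P$. For the reverse inclusion, given $w \in W$, note that $w = w + 0$ with $w \in W$ and $0 \in S$ is its (unique) decomposition, so $Pw = w$; hence $W \subseteq \ran P$ and $\ran P = W$. This computation also shows $\left. P \right|_W = \mathrm{id}_W$, which yields idempotency at once: for any $v$ we have $Pv \in W$, so $P(Pv) = Pv$, that is $P \circ P = P$, so $P$ is indeed a projection.

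Finally, for the kernel: if $s \in S$, then $s = 0 + s$ is its decomposition, so $Ps = 0$ and $S \subseteq \ker P$; conversely, if $Pv = 0$ and $v = w + s$, then $w = Pv = 0$, so $v = s \in S$, giving $\ker P \subseteq S$ and hence $\ker P = S$. I do not expect any genuine obstacle here — every step is a one-line consequence of the uniqueness in the direct-sum decomposition; the only point requiring care is to invoke that uniqueness explicitly at each stage (well-definedness, linearity, and both the range and kernel identities), since it is the single fact carrying the whole argument.
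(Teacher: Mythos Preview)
Your proposal is correct and follows essentially the same approach as the paper: both arguments hinge on the uniqueness of the direct-sum decomposition, and both establish idempotency via the observation that $w = w + 0$ forces $Pw = w$ for $w \in W$. Your write-up is in fact more thorough than the paper's, which dispatches linearity, range, and kernel with the single remark that they ``follow directly from its definition''; your explicit verification of each clause is entirely appropriate.
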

\begin{proof}
    Let $v \in V$ be arbitrary. 
    Let $w \in W$ and $s \in S$ be the unique vectors that satisfy $v = w + s$. 
    Then, we have $P v = w \in W$ by definition. 
    The assumption $V = W \oplus S$ implies that $w \in W$ has only a trivial component in $S$, and therefore
    \begin{equation*}
        (P \circ P) v = P w = P(w + 0) = w = P v.
    \end{equation*}
    This shows that $P$ is a projection of $V$. 
    The range and null space of $P$ follow directly from its definition.
\end{proof} 

% An appendix contains supplementary information that is not an essential part of the text itself but which may be helpful in providing a more comprehensive understanding of the research problem or it is information that is too cumbersome to be included in the body of the paper.

%%=============================================%%
%% For submissions to Nature Portfolio Journals %%
%% please use the heading ``Extended Data''.   %%
%%=============================================%%

%%=============================================================%%
%% Sample for another appendix section			       %%
%%=============================================================%%

%% \section{Example of another appendix section}\label{secA2}%
%% Appendices may be used for helpful, supporting or essential material that would otherwise 
%% clutter, break up or be distracting to the text. Appendices can consist of sections, figures, 
%% tables and equations etc.

\end{appendices}

%%===========================================================================================%%
%% If you are submitting to one of the Nature Portfolio journals, using the eJP submission   %%
%% system, please include the references within the manuscript file itself. You may do this  %%
%% by copying the reference list from your .bbl file, paste it into the main manuscript .tex %%
%% file, and delete the associated \verb+\bibliography+ commands.                            %%
%%===========================================================================================%%

\bibliography{sn-bibliography}% common bib file
%% if required, the content of .bbl file can be included here once bbl is generated
%%\input sn-article.bbl

\end{document}